\let\csname equation*\endcsname\relax
\let\csname endequation*\endcsname\relax
\numberwithin{equation}{section}
\numberwithin{figure}{section}
\newcommand\tabcaption{\def\@captype{table}\caption}
\newtheorem{thm}{Theorem}[section]
\newtheorem{lem}[thm]{Lemma}
\newtheorem{prop}[thm]{Proposition}
\newtheorem{aspt}[thm]{Assumption}
\newtheorem{rem}[thm]{Remark}
\newcommand{\cut}{\varphi}
\newcommand{\CX}{\mathcal{X}}
\newcommand{\Mc}{M_{2}}
\newcommand{\Me}{M_{1}}
\newcommand{\CY}{\mathcal{Y}}
\newcommand{\BX}{\mathbf{X}}
\newcommand{\BY}{\mathbf{Y}}
\newcommand{\CB}{\mathcal{B}}
\newcommand{\unit}{\mathds{1}}
\newcommand{\Vhat}{\widehat{V}}
\newcommand{\Xhat}{\widehat{X}}
\newcommand{\Bhat}{\widehat{B}}
\newcommand{\CD}{\mathcal{D}}
\newcommand{\Yhat}{\widehat{Y}}
\newcommand{\Vbar}{\overline{V}}
\newcommand{\Chat}{\widehat{C}}
\newcommand{\Ctilde}{\widetilde{C}}
\newcommand{\Shat}{\widehat{S}}
\newcommand{\CE}{\mathcal{E}}
\newcommand{\reals}{\mathbb{R}}
\newcommand{\rhom}{\rho_0}
\newcommand{\Vhatbar}{\overline{\widehat{V}}}
\newcommand{\devk}{\left|H\widehat{V}^{(k)}_n-Z_n^{(k)}\right|^2}
\newcommand{\devsum}{\frac{1}{K}\sum_{k=1}^K\devk}
\newcommand{\BP}{\mathbf{P}}
\newcommand{\Eo}{\Theta_n}
\newcommand{\Cu}{\Xi_n}
\newcommand{\BE}{\mathbb{E}}
\newcommand{\pitilde}{\tilde{\pi}}
\newcommand{\Ubar}{\overline{U}}
\begin{document}
\title{Nonlinear stability of the ensemble Kalman filter with adaptive covariance inflation}
\author{Xin T Tong, Andrew J  Majda  and David Kelly }
\maketitle

\abstract{The Ensemble Kalman filter and Ensemble square root filters are data assimilation methods used to combine high dimensional nonlinear models with observed data. These methods have proved to be indispensable tools in science and engineering as they allow computationally cheap, low dimensional ensemble state approximation for extremely high dimensional turbulent forecast models. From a theoretical perspective, these methods are poorly understood, with the exception of a recently established but still incomplete nonlinear stability theory. Moreover, recent numerical and theoretical studies of catastrophic filter divergence have indicated that stability is a genuine mathematical concern and can not be taken for granted in implementation. In this article we propose a simple modification of ensemble based methods which resolves these stability issues entirely. The method involves a new type of adaptive covariance inflation, which comes with minimal additional cost. We develop a complete nonlinear stability theory for the adaptive method, yielding Lyapunov functions and geometric ergodicity under weak assumptions. We present numerical evidence which suggests the adaptive methods have improved accuracy over standard methods and completely eliminate catastrophic filter divergence. This enhanced stability allows for the use of extremely cheap, unstable forecast integrators, which would otherwise lead to widespread filter malfunction.}
 \section{Introduction}
\label{sec:intro}
With the growing importance of accurate weather forecasting and expanding availability of geophysical measurement, data assimilation for high dimensional dynamical system and data has never been more crucial. The ensemble Kalman filter (EnKF) \cite{evensen03} and ensemble square root filters (ESRF) \cite{bishop01,And01} are ensemble based algorithms well designed for this purpose. They quantify the uncertainty of an underlying system using the sample information of a moderate size ensemble $\{V^{(k)}_n\}^K_{k=1}$, thereby significantly reducing the computational cost. The simplicity of these algorithms and their accurate performance has fueled their wide application in various fields of geophysical science \cite{MH12,kalnay03}.
\par
Despite their success, there are many unresolved issues with ensemble based methods. First, there is very little theoretical understanding of the methods and notably the stability framework is incomplete. Only very recently has the stability theory been partially understood in the finite ensemble size scenario, with progress on well-posedness in the fully observed case \cite{KLS14} and subsequently in nonlinear stability of the partially observed case, but under the so-called observable energy criterion \cite{TMK15non}. A better understanding of filter stability is sorely lacking, with recent numerical studies \cite{MH08,GM13} revealing the mechanistically mysterious phenomenon known as \emph{catstrophic filter divergence}, whereby filter state estimates tend to machine infinity, whilst the underlying signal remains in a bounded set. In \cite{KMT15} it has been established rigorously, through an elementary model, that this divergence is not caused by the instability of the numerical integrator alone, instead the update step in the filter itself plays a crucial role in the genesis of divergence.     
\par
In this article we propose a simple modification of EnKF (and ESRF) which resolves these issues completely. The modification is a type of covariance inflation, a widely used strategy for stabilizing and improving the accuracy of filters. Typically the forecast covariance $\Chat_n$ is (in the case of EnKF) additively inflated to $\Chat_n + \lambda_n I$ for some choice of inflation constant $\lambda_n$. Since the forecast covariance decides how much uncertainty is held by the forecast prediction, inflation has the affect of pulling the filter back towards the observations, yielding improved stability. Existing methods of covariance inflation, such as constant additive inflation ($\lambda_n = {\rm{constant}}$) tend to improve accuracy, but are still vulnerable to stability issue like catastrophic filter divergence \cite{MH08}. 
\par
The modification we propose selects the inflation strength $\lambda_n$ \emph{adaptively} and varies according to the distribution of the ensemble. In particular, if the filter is deemed to be performing well, the inflation strength is set to zero and the method is reduced to EnKF (or ESRF, as desired). If the filter is deemed to be `malfunctioning', then the adaptive inflation is \emph{triggered}. The strength of the inflation becomes larger when the filter strays further into malfunction. To decide when and to what extent the filter is malfunctioning, we employ two simple statistics of the ensemble, $\Eo$ and $\Cu$, which are based respectively on the ensemble innovation and the cross correlation between observed and unobserved components. The two statistics are so chosen as it is clear from the theoretical framework that these are precisely the two variables which must be controlled to guarantee stability. Nevertheless, there is a quite natural interpretation as to why these two statistics are an effective gauge of filter performance. The full derivation and explanation of the adaptively inflated methods are given in Section \ref{s:ai}.  
\par
In Sections \ref{s:stab} and \ref{s:ergo} we develop a complete stability theory for the adaptively inflated methods by extending the stability framework established in \cite{TMK15non}. This framework is comprised of two main results: time uniform mean-square estimates on the ensemble via a Lyapunov argument and geometric ergodicity of the signal-ensemble process. We prove that if the underlying model satisfies energy dissipation, then the filter inherits this dissipation and in particular has a Lyapnuov function with compact sub-level sets. This is a vast improvement on the results in \cite{TMK15non} for EnKF, since firstly the observable energy criterion is no longer required and secondly the Lyapunov function for the filter is guaranteed to have compact sub-level sets. This latter fact leads to geometric ergodicity of the signal-ensemble process for the adaptively inflated EnKF, which follows as an immediate corollary of the results in \cite{TMK15non}.    
\par
In Section \ref{s:numerics} we investigate the performance of the adaptively inflated filter numerically, comparing performance with standard methods, such as non-inflated EnKF and non-adaptively (such as constantly) inflated EnKF. The adaptive method performs at least as well and typically better than standard methods and as expected completely avoids the issue of catastrophic filter divergence , which is quite prevalent in all standard methods. The additional expense in computing the inflation strength is minimal and hence the adaptive methods run at almost identical speed to the standard methods. Most impressively, the adaptive method allows for the use of extremely cheap but unstable integrators, like explicit Euler. We will see that such integrators are useless for standard methods due to the prevalence of catastrophic filter divergence, but are very successful when used with adaptive methods.  
\par
Regarding existing literature, there have recently been many methods proposed which employ a type of adaptive inflation for ensemble based methods \cite{And07, And09,BS13,ZH15,YZ15}, but our method is truly novel. Moreover, none of the cited methods have an established theoretical understanding, and our framework may serve as a good starting point for developing such an understanding.     
\par
The structure of the article is as follows. In Section \ref{s:enkf} we define the standard ensemble based methods, EnKF and ESRFs. In Section \ref{s:ai} we define the adaptive inflated modifications of EnKF and ESRFs. In Section \ref{s:stab} we derive time uniform mean-square estimates for the adaptively inflated filters via a Lyapunov function argument. In Section \ref{s:ergo} we prove geometric ergodicity of the signal-ensemble process for the adaptively inflated filters. In Section \ref{s:numerics} we investigate the performance of these filters numerically. In Section \ref{s:conclusion} we conclude with a discussion. The Appendix contains technical tools that will be used for the main results.

\section{Ensemble based methods and adaptive inflation}
\label{s:enkf}
\label{sec:adacov}
\subsection{Model setup}
\label{sec:model}
In this paper, we assume the signal (truth) sequence $U_n\in \mathbb{R}^d$  is generated by\begin{equation}
\label{sys:discrete}
U_{n}=\Psi_h(U_{n-1})+\zeta_n\;,
\end{equation}
where $\Psi_h : \reals^d \to \reals^d$ is a deterministic mapping and $\{\zeta_n\}_{n\geq 1}$ is the system noise. The noise $\zeta_n$ is assumed to be is independent of $\zeta_{1},\ldots \zeta_{n-1}$ when conditioned on the realization of $U_{n-1}$, the conditional mean is zero $\BE (\zeta_n | U_{n-1}) = 0$ and the conditional covariance $\mathbb{E}(\zeta_n\otimes \zeta_n|U_{n-1})$ is denoted by $R_h(U_{n-1})$. Due to the conditional independence of the noise sequence, this gives rise to a Markov chain. 
\par
For example, the model is often generated by the solution of a stochastic differential equation (SDE) 
\begin{equation}
\label{sys:flow}
du_t=\psi(u_t)dt+\Sigma dW_t\;,
\end{equation}
for a sufficiently regular vector field $\psi : \reals^d \to \reals^d$, diffusion coefficient $\Sigma \in \reals^{d \times e}$ and $e$-dimensional Wiener process $W$. We then take $U_n = u_{nh}$ for some fixed $h>0$. In the notation above, we have $\Psi_h(u_0) = \BE (u_h | u_0)$ and $\zeta_n = u_{nh} - \Psi_h(u_{(n-1)h})$. It is easy to see that this satisfies the above conditions.
%
%


%

\subsection{Linear observations}
\label{sec:linobs}
We assume that the truth $U_n$ is observed linearly with a mean zero Gaussian perturbation 
\[
Z_n=HU_n+\xi_n\;.
\] 
where the observation matrix $H$ is of size $q\times d$ with $H = (H_0 , 0_{q\times (d-q)})$ where $H_0 = {\rm{diag}} (h_1, \dots , h_q  )$, $q \leq d$ is the rank of $H$ and  $h_1,\dots,h_q > 0$ are fixed scalars. For the observational noise, we assume that $\mathbb{E}(\xi_n |U_{n-1})=0$ and $\mathbb{E}(\xi_n\otimes \xi_n|U_{n-1})=I_q$.
\par
This seemingly restrictive setting can be assumed without loss of generality. Indeed, any observational noise covariance can be reduced to the identity via a simple rotation on the filtering problem. Suppose that $\xi_n$ has a nonsingular covariance matrix $\Gamma$ (we do not consider the singular case in this article) and  $\Gamma^{-1/2}H$ has an SVD decomposition $\Gamma^{-1/2}H=\Phi \Lambda \Psi^T$, then we rotate the coordinate system and consider 
\begin{equation}
\label{eqn:HSVD}
\widetilde{U}_n=\Psi^TU_n,\quad \tilde{\xi}_n=\Phi^T\Gamma^{-1/2}\xi_n,\quad
\widetilde{Z}_n=\Phi^T \Gamma^{-1/2}Z_n=\Lambda \widetilde{U}_n+\tilde{\xi}_n. 
\end{equation}
Hence this change of coordinates reduces the observation matrix and the observational noise covariance to the desired form.

If the observation dimension $q$ is larger than the model dimension $d$, the last $d-q$ diagonal entries of $\Lambda$ are zero, so the last $d-q$ rows of $\widetilde{Z}_n$ are independent of the signal and play no role in filtering, hence we can ignore and set $d=q$. 
\par
Since all the transformations above are linear and  bijective,  filtering $\widetilde{U}_n$ with $\widetilde{Z}_n$ is equivalent to filtering $U_n$ with $Z_n$, in the sense that the subsequent assumptions and results hold equally for the original and transformed system. When necessary, this will be clarified within the text. 

%
%
%


\subsection{Ensemble Kalman filter}\label{s:enkf_algo}

In the standard Kalman filtering theory, the conditional distribution of the signal  process $U_n$  given the observation sequence $Z_1,\ldots, Z_n$ is given by a Gaussian distribution. EnKF inherits this idea by using a group of ensembles $\{V_n^{(k)}\}_{k=1}^K$ to represent this Gaussian distribution, as the mean and covariance can be taken as the ensemble mean and covariance. The EnKF operates very much like a Kalman filter, except its forecast step requires a Monte Carlo simulation due to the nonlinearity of the system. In detail, the EnKF is an iteration of following two steps, with (for instance) $\widehat{V}^{(k)}_0$ being sampled from the equilibrium measure of $U_n$.
\begin{itemize}
\item Forecast step: from the posterior ensemble at time $n-1$, $\{V^{(k)}_{n-1}\}_{k=1}^K$,
a forecast ensemble for time $n$ is generated by 
\begin{equation}
\label{eqn:forecast}
\widehat{V}_{n}^{(k)}=\Psi_h(V_{n-1}^{(k)})+\zeta^{(k)}_n, 
\end{equation}
where $\zeta^{(k)}_n$ are independent samples drawn from the same distribution as $\zeta_n$. 
\item Analysis step: upon receiving the new observation $Z_n$, random perturbations of it are generated by adding $\xi^{(k)}_{n}$:
\[
Z^{(k)}_{n}=Z_{n}+\xi^{(k)}_{n},
\]
where $\xi^{(k)}_n$ are independent samples drawn from the same distribution as $\xi_{n}$. Using the Kalman update rule, each ensemble member is then updated as follow with $\Chat_n$ being the sample covariance of the forecast ensemble:
\begin{align}
\label{eqn:KFupdate}
\begin{split}
V^{(k)}_{n}&=\widehat{V}^{(k)}_{n}-\widehat{C}_{n}H^T(I+H\widehat{C}_nH^T)^{-1}(H \widehat{V}^{(k)}_n-Z^{(k)}_{n})\;\\
&=(I+\Chat_nH^TH)^{-1}\widehat{V}^{(k)}_n+(I+\Chat_nH^TH)^{-1}\Chat_n H^T Z_n^{(k)}\;,
\end{split}
\end{align}
where 
\begin{equ}\label{e:forecast_cov}
\widehat{C}_{n}=\frac{1}{K-1}\sum_{k=1}^K(\widehat{V}^{(k)}_{n}-\overline{\widehat{V}}_{n})\otimes (\widehat{V}^{(k)}_{n}-\overline{\widehat{V}}_{n}) \;, \quad \overline{\widehat{V}}_{n}=\frac{1}{K}\sum^{K}_{k=1}\widehat{V}^{(k)}_n\;.
\end{equ}
\end{itemize}
See \cite{KLS14} for a derivation of EnKF as an approximate Monte Carlo method for sampling the true posterior distribution. In the case of linear model dynamics, it has been rigorously shown \cite{mandel2011convergence,legland10} that EnKF is an approximate Monte Carlo method for the true posterior distribution, with convergence in the $K\to\infty$ limit.

\par
 Based on our description above,  the augmented process $\{U_n, V^{(1)}_n,\ldots, V^{(K)}_n\}$ is a Markov chain. As above, we will denote the natural filtration up to time $n$ as $\mathcal{F}_n=\sigma\{U_m, V^{(1)}_m,\ldots, V^{(K)}_m,m\leq n\}$, and denote the conditional expectation with respect to $\mathcal{F}_n$ as $\mathbb{E}_n$.
\par
In the sequel it will be convenient to split the ensemble into observed and unobserved components. In particular, for each ensemble member we write $V_n^{(k)} = ( X_n^{(k)} , Y_n^{(k)} )$ where $X_n^{(k)} \in \reals^q$ and $Y_n^{(k)} \in {\rm{ker}}(H)$. Similarly, we write $\Vhat_n^{(k)}=(\Xhat_n^{(k)},\Yhat_n^{(k)})$ and for the forecast ensemble covariance

\[
\widehat{C}_n=\begin{bmatrix}
\widehat{C}^X_n, & \widehat{B}_n\\
\widehat{B}^T_n, & \widehat{C}^Y_n
\end{bmatrix},
\]
where 
\begin{equ}
\widehat{C}^X_{n}=\frac{1}{K-1}\sum_{k=1}^K(\widehat{X}^{(k)}_{n}-\overline{\widehat{X}}_{n})\otimes (\widehat{X}^{(k)}_{n}-\overline{\widehat{X}}_{n}) \; \quad \overline{\widehat{X}}_{n}=\frac{1}{K}\sum^{K}_{k=1}\widehat{X}^{(k)}_n
\end{equ}
and so forth the other components. In this notation, the update rule \eqref{eqn:KFupdate} becomes
\begin{equ}\label{e:X_update}
X_n^{(k)} = \widehat{X}_n^{(k)} - (\Chat_n^X)^T H_0^T (I + H_0 \Chat_n^X H_0^T)^{-1} (H_0 \widehat{X}_n^{(k)} - Z_n^{(k)})
\end{equ}
for the observed components and
\begin{equ}\label{e:Y_update}
Y_n^{(k)} = \widehat{Y}_n^{(k)} - (B_n)^T H_0^T (I + H_0 \Chat_n^X H_0^T)^{-1} (H_0 \widehat{X}_n^{(k)} - Z_n^{(k)})
\end{equ}
for the unobserved components.

\subsection{Ensemble square root filters}
\label{sec:ESRFintro}\label{s:esrf}
One drawback of EnKF comes from its usage of artificial noise $\xi_n^{(k)}$, as this introduces unnecessary sampling errors, particularly when the ensemble size is small \cite{evensen04}. The motivation behind the artificial noise is to make the posterior ensemble covariance 
\[
C_{n}:=\frac{1}{K-1}\sum_{k=1}^K(V^{(k)}_{n}-\overline{V}_{n})\otimes (V^{(k)}_{n}-\overline{V}_{n}),\quad  \overline{V}_{n}:=\frac{1}{K}\sum^{K}_{k=1}V^{(k)}_n,
\]
satisfy the covariance update of the standard Kalman filter
\begin{equation}
\label{eqn:postprior}
C_n=\widehat{C}_n-\widehat{C}_nH^T(H^T\widehat{C}_nH+I)^{-1}H\widehat{C}_n,
\end{equation}
when the left hand is averaged over $\xi_n^{(k)}$ \cite{AA99, HWS01, FB07} . 
ESRFs, including the ensemble transform Kalman filter (ETKF) and the ensemble adjustment Kalman filter (EAKF), aim to resolve this issue by manipulating the posterior spreads to ensure that \eqref{eqn:postprior} holds. Both ETKF and EAKF algorithms are described by the following update steps, with the only difference occurring in the assimilation step for the spread. As with EnKF, the initial ensemble $\{V_{0}^{(k)}\}_{k=1}^K$ is (for instance) sampled from the equilibrium distribution of $U_n$. 
\begin{itemize}
\item Forecast step: identical to EnKF, the forecast ensembles at time $n$ is generated from posterior ensembles at time $n-1$:
\[
\widehat{V}_{n}^{(k)}=\Psi_h(V_{n-1}^{(k)})+\zeta^{(k)}_n.
\]
The forecast ensemble covariance $\widehat{C}_n$ is then computed using \eqref{e:forecast_cov}. 
\item Assimilation step for the mean: upon receiving the new observation $Z_n$, the posterior ensemble mean is updated through
\begin{equation}
\label{sys:ESRFmean}
\overline{V}_n=\overline{\widehat{V}_n}-\widehat{C}_{n}H^T(I+H\widehat{C}_nH^T)^{-1}(H \overline{\widehat{V}}_n-Z_{n}),\quad \overline{\widehat{V}}_{n}=\frac{1}{K}\sum^{K}_{k=1}\widehat{V}^{(k)}_n. 
\end{equation}
\item Assimilation step for the spread: The forecast ensemble spread is given by the $d \times K$ matrix 
\[
\widehat{S}_n = [\widehat{V}^{(1)}_n-\overline{\widehat{V}}_n,\dots , \widehat{V}^{(K)}_n-\overline{\widehat{V}}_n  ]\;.
\]
To update the posterior spread, first find a matrix $T_n\in \mathbb{R}^{d\times d}$ (for ETKF) or $A_n\in \mathbb{R}^{K\times K}$ (for EAKF) such that
\begin{equation}
\label{sys:ESRFspread}
\frac{1}{K-1}T_n\widehat{S}_n \otimes  T_n\widehat{S}_n =\frac{1}{K-1}\widehat{S}_n A_n\otimes  \widehat{S}_n A_n=\widehat{C}_n-\widehat{C}_nH^T(H^T\widehat{C}_nH+I)^{-1}H\widehat{C}_n\;. 
\end{equation}
The posterior spread is updated to $S_n=T_n\widehat{S}_n$ (for ETKF) or $S_n=\widehat{S}_nA_n$ (EAKF), and the ensemble members are updated to
\[
V^{(k)}_n = \overline{V}_n + S_n^{(k)}\;,
\]
where $S_n^{(k)}$ denotes the $k$-th column of the updated spread matrix $S_n$. 
By construction, the posterior covariance $C_n=(K-1)^{-1}S_n^TS_n$  satisfies \eqref{eqn:postprior}. 
\end{itemize}
At this stage it suffices to know that such $A_n$ and $T_n$ exist, their finer properties play no role in the discussion concerning stability, but we refer the reader to \cite{MH12, TMK15non}. 
%

 Based on our description above,  the augmented process $\{U_n, V^{(1)}_n,\ldots, V^{(K)}_n\}$ is again a Markov chain. As in the previous section, we employ the notation $\BE_n$ to denote conditional expectation with respect to $\mathcal{F}_n$.

\section{Adaptive inflation methods}\label{s:aaci_algo} \label{s:ai}

In this section we introduce the adaptive inflation modification of EnKF, as well as the square root filters ETKF and EAKF. We start with the modification of EnKF, which we refer to \emph{EnKF-AI} with the suffix standing for `adaptive inflation'. The nomenclature for the algorithms and their modifications is summarized at the end of the section, in Table \ref{tab:names}. 
\par
The adaptive inflation algorithm is precisely the EnKF algorithm with the forecast covariance $\Chat_n$ replaced by an inflated forecast covariance $\Ctilde_n = \Chat_n + \lambda_n I$. More precisely, the filter ensemble $\{V_n^{(k)}\}_{k\leq K}$  is governed by 
\begin{equation} 
\label{sys:EnKFad}
\begin{gathered}
V^{(k)}_{n}=\widehat{V}^{(k)}_{n}-\widetilde{C}_{n}H^T(I+H\widetilde{C}_nH^T)^{-1}(H \widehat{V}^{(k)}_n-Z^{(k)}_{n})\\
\widehat{V}^{(k)}_{n}=\Psi_h(V^{(k)}_{n-1})+\zeta^{(k)}_n,\quad \overline{\widehat{V}}_{n}=\frac{1}{K}\sum^{K}_{k=1}\widehat{V}^{(k)}_n,\quad Z^{(k)}_{n+1}=Z_{n}+\xi^{(k)}_{n},\\
\widetilde{C}_n=\widehat{C}_{n}+\lambda_n I_d,\quad
\widehat{C}_{n}=\frac{1}{K-1}\sum_{k=1}^K(\widehat{V}^{(k)}_{n}-\overline{\widehat{V}}_{n})\otimes (\widehat{V}^{(k)}_{n}-\overline{\widehat{V}}_{n})\;.
\end{gathered}
\end{equation}

 The inflation parameter $\lambda_n$ is chosen in such a way that it only plays a role when the filter is malfunctioning. In particular, when the filter is giving accurate predictions the inflation parameter will be zero. The inflation will be `triggered' when either of two statistical properties of the filter, $\Eo$ or $\Cu$, exceeds appropriately chosen thresholds, indicating that filter predictions have begun to stray far from the truth. Hence we define 
 \begin{equ}\label{eqn:lambda}\label{e:lambda}
 \lambda_n = \cut (\Eo,\Cu) \;, \quad \cut (x,y) =  c_\cut x (1+y) \unit_{\{x > \Me \; {\rm{or}} \; y >\Mc\}}
 \end{equ}
where $\cut$ plays the role of a cut-off function, $c_\cut$ is some fixed positive constant and $\Me,\Mc$ are fixed positive thresholds constant used to decide whether the filter is functioning properly or not.  
 \par
The first statistical quantity $\Eo$, measures how far predicted observations are from actual observations: 
\[
\Eo:=\sqrt{\devsum}\;.
\] 
In a standard Kalman filter, the quantity $H \Vhat_n - Z_n$ is called the \emph{innovation process} and is used for to test accuracy of the filter. Hence the quantity $\Eo$ should be thought of as the norm of the ensemble innovation process.  
\par
The second statistical property $\Cu$ measures the forecast ensemble covariance between observed components and unobserved components. Using the observed-unobserved notation from Section \ref{s:enkf_algo}, we have $\Cu = \|{\Bhat_n}\|$ where $\Bhat_n$ is the covariation between observed and unobserved components
\[
\widehat{B}_n=\frac{1}{K-1}\sum_{k=1}^K (\Xhat_n^{(k)}-\overline{\Xhat}_n)\otimes (\Yhat_n^{(k)}-\overline{\Yhat}_n)\;.
\] 
Intuitively, it is important to control this cross-variation, since this avoids the situation whereby the filter magnifies a small error in the observed component and imposes it on the unobserved component. This was a key mechanism leading to the filter instability found in \cite{KMT15}.  
\par
Although both statistics $\Eo$ and $\Cu$ can be intuitively connected to the performance of the filter, the real reason we choose to control these two statistics is a mathematical one. If one attempts to prove boundedness results of Section \ref{s:stab} for the unmodified EnKF, it becomes clear that the ensembles cannot be stabilized unless one has adequate control on these two quantities. Hence it is a simple observation that an adaptive inflation which guarantees control of these two statistics will therein guarantee boundedness and stability of the corresponding adaptive filter.    

\subsection{Adaptive inflation for other ensemble based filters}\label{s:other_filters}
The adaptive inflation approach can be applied to other ensemble based filters for guaranteed stability with minimal changes to the original filtering method. 
\par
Two popular modifications to EnKF are constant additive covariance inflation and constant multiplicative covariance inflation. With these two modifications, the analysis step in EnKF operates with the ensemble covariance $\Chat_n$ in \eqref{eqn:KFupdate} replaced by 
\begin{equ}\label{e:enkf_ci}
\Ctilde_n = \Chat_n+\rho I \;, \quad  \Ctilde_n = (1+\rho)\Chat_n\;,
\end{equ}
respectively, and $\rho > 0$ here is a fixed constant. We will refer to EnKF with constant inflation as \emph{EnKF-CI}, with the suffix standing for `constant inflation'. Typically we will use additive rather than multiplicative inflation, but the theoretical results will hold for both verbatim.   
\par

We can combine constant and adaptive inflation by simply adding both simultaneously
\begin{equ}\label{e:enkf_cai}
\Ctilde_n=\Chat_n+\rho I+\lambda_n I,\quad \Ctilde_n=(1+\rho)\Chat_n+\lambda_n I
\end{equ}
respectively in the analysis step \eqref{eqn:KFupdate}. We will refer to this as \emph{EnKF-CAI}, with the suffix standing for `constant+adaptive inflation'. 
\par
In the case of the ESRFs introduced in Section \ref{s:esrf}, we follow the similar pattern of simply replacing $\Chat_n$ with some inflated $\Ctilde_n$. However, this replacement only occurs when calculating the mean and not when calculating the posterior covariance. This is because $C_n$ is the posterior ensemble covariance, its rank cannot exceed $K-1$, while additive inflation can make the right hand side of \eqref{eqn:postprior} of rank $d$, and in most practical cases the model dimension $d$ is larger than the ensemble size $K$. 
\par  
Hence the adaptively inflated version of ESRF is given by  
\begin{equation}
\label{sys:ESQFad}
\begin{gathered}
\overline{V}_{n}=\overline{\widehat{V}}_{n}-\widetilde{C}_{n}H^T(I+H\widetilde{C}_nH^T)^{-1}(H \overline{\widehat{V}}_n-Z_n),\\
C_n=\widehat{C}_n-\widehat{C}_nH^T(I+H\widehat{C}_nH^T)^{-1}H\widehat{C}_n\;.
\end{gathered}
\end{equation}
where
\begin{equs} \label{e:esrf_lambda} \label{e:esrf_inflation}
\begin{gathered}
\Ctilde_n = \Chat_n + \lambda_n I \quad \text{(ETKF-AI / EAKF-AI)},\\
\Ctilde_n = \Chat_n + \rho I \quad \text{(ETKF-CI / EAKF-CI)},\\
\Ctilde_n = \Chat_n + \rho I + \lambda_n I \quad \text{(ETKF-CAI / EAKF-CAI)}.
\end{gathered}
\end{equs}
\todo{DK: Do we need multiplicative here?}
Notice that we use the same nomenclature from Table \ref{tab:names} as in the case of EnKF. The adaptive inflation strength $\lambda_n$ defined the same as EnKF, but with $\Eo$ now defined as 
\begin{equ}
\Eo := \frac{1}{K}\sum_{k=1}^K |H\Vhat^{(k)}_n-Z_n|^2\;,
\end{equ}
since $Z_n^{(k)}$ no longer plays a role in the setting of ESRF.

\begin{table}[h!]
\begin{center}
\begin{tabular}{|c | c| c|}
\hline
Acronym & Algorithm & Equation reference \\ 
\hline
EnKF & Ensemble Kalman filter & \eqref{eqn:KFupdate} \\
EnKF-AI & Ensemble Kalman filter w/ adaptive inflation & \eqref{sys:EnKFad} \\
EnKF-CI & Ensemble Kalman filter w/ constant inflation & \eqref{e:enkf_ci} \\ 
EnKF-CAI & Ensemble Kalman filter w/ constant + adaptive inflation & \eqref{e:enkf_cai} \\
\hline
\end{tabular}
\end{center}
\caption{ Acronyms for the ensemble methods. }
\label{tab:names}
\end{table}

\section{Energy principles}\label{s:stab}
\label{sec:energy}

In this section we show that the adaptive filtering methods introduced in Section \ref{s:ai} inherit an energy principle from the underlying model. Using the notation for the model introduced in Section \ref{s:enkf}, the energy principle for the model takes the following form:
\begin{aspt}[Energy principle]\label{ass:energy}
\label{aspt:kinetic} 
There exist constants $\beta_h \in (0,1), K_h > 0$, such that
\[
\mathbb{E}_{n-1}|U_n|^2 \leq (1-\beta_h) |U_{n-1}|^2+K_h \quad \text{a.s.} \;,
\]
recalling that $\mathbb{E}_{n-1}$ denotes conditional expectation with respect to the $\sigma$-algebra $\mathcal{F}_{n-1} : = \sigma (U_0,U_1,\dots,U_{n-1})$. Equivalently,
\[
|\Psi_h(u)|^2+ \text{tr}(R_h(u))\leq (1-\beta_h) |u|^2+K_h\;,
\]
for all for all $u\in \mathbb{R}^d$. 
\end{aspt} 
\begin{rem}
There is a slight abuse of notation here, since we also use $\mathbb{E}_{n-1}$ to denote expectation conditioned on the history of the signal-ensemble process. However it is clear that in the above situation the two coincide. 
\end{rem}

When $U_n$ is given by the discrete time formulation of an SDE as in \eqref{sys:flow}, it follows from a simple Gronwall argument that Assumption \ref{aspt:kinetic} holds as long as $\langle \psi(u), u\rangle\leq -\beta |u|^2+k_h$ with constants $\beta, k_h>0$. 
Therefore, the discrete time formulation of truncated Navier Stokes equation,  Lorenz 63 and 96 models all satisfy Assumption \ref{aspt:kinetic}, explicit verifications can be found in Section 2.5 of \cite{TMK15non}. 
\par
It is natural to ask whether a filter inherits an energy principle from its underlying model and hence inherits a type of stability. In \cite{TMK15non} it was shown that the non-adaptive ensemble based filters (EnKF, ESRF) inherit a stronger notion of energy principle called the observable energy principle. In particular, if the model satisfies
\begin{equ}\label{e:stab_obs}
|H \Psi_h(u)|^2+ \text{tr}(H R_h(u)H^T)\leq (1-\beta_h) |H u|^2+K_h\;,
\end{equ}
then the filter satisfies a related energy principle, which a Lyapunov function whose sublevel sets are only compact in the case where $H$ is of full rank (complete observations). This is clearly a much weaker result than one would hope for, since one must assume \eqref{e:stab_obs} which is not thought to be true for partially observing filters and moreover the result is not as strong as desired since the Lyapunov function cannot be used for ergodicity arguments.  
\par
We will now show that the adaptive inflation filters completely avoid these issues. In particular, the adaptively inflated filters inherit the usual energy principle (Assumption \ref{ass:energy}) and moreover, the Lyapnuov function for the ensemble has compact sub-level sets for any choice of observation matrix. The inflation mechanism additionally ensures that the observed directions of the filter $HV_n^k$ never stray from the (perturbed) observations $Z_n^k$, this is indeed the main purpose of the inflation.

\begin{thm}
\label{thm:adEnKF}
Let $\{V_n^{(k)}\}_{k=1}^K$ denote the EnKF-AI ensemble described by \eqref{sys:EnKFad} with an inflation strength $\lambda_n \geq \varphi(\Eo,\Cu)$, then we have the following estimates:  
\begin{enumerate}[(i)]
\item The ensemble innovation satisfies the almost sure bound
\[
|HV^{(k)}_n-Z^{(k)}_n|\leq  \sqrt{K} \max\{\Me, \rhom^{-1}c_\cut^{-1}\}
\]
Here $\rhom$ denotes the minimum eigenvalue of $H_0H_0^T$. 
\item Suppose the model $U_n$ additionally satisfies an energy principle (Assumption \ref{aspt:kinetic}) and let
\[
\CE_n=4K\rho_0^{-1}\beta_h^{-1}\|H\|^2|U_n|^2+\sum_{k=1}^K |V^{(k)}_n|^2\;.
\]
Then there exists constant $\beta_h \in (0,1)$, $D > 0$ such that
\[
\mathbb{E}_{n-1}\CE_n\leq (1-\tfrac{1}{2}\beta_h) \CE_{n-1}+D\;.
\]
In particular, we have the time uniform bound $\sup_{n\geq 0} \mathbb{E} \CE_n < \infty$. 

\end{enumerate}
\end{thm}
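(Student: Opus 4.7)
For Part (i), the plan is a direct algebraic manipulation of the analysis step followed by a case analysis on whether the adaptive inflation is triggered. Applying $H$ to the update \eqref{sys:EnKFad} and using the identity $I - H\widetilde{C}_n H^T(I+H\widetilde{C}_n H^T)^{-1} = (I+H\widetilde{C}_n H^T)^{-1}$, one obtains
\[
HV^{(k)}_n - Z^{(k)}_n = (I+H\widetilde{C}_n H^T)^{-1}\bigl(H\widehat{V}^{(k)}_n - Z^{(k)}_n\bigr).
\]
Since $\widehat{C}_n\succeq 0$, one has $H\widetilde{C}_n H^T \succeq \lambda_n HH^T = \lambda_n H_0 H_0^T \succeq \lambda_n\rho_0 I$, hence the operator norm of the inverse above is at most $(1+\lambda_n\rho_0)^{-1}$. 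Summing over $k$ and recalling the definition of $\Theta_n$, this gives $|HV^{(k)}_n-Z^{(k)}_n|\leq (1+\lambda_n\rho_0)^{-1}\sqrt{K}\,\Theta_n$. Now I split into two cases: if the inflation is not triggered then $\lambda_n=0$ and $\Theta_n\leq M_1$, yielding a bound of $\sqrt{K}\,M_1$; if the inflation is triggered, then $\lambda_n\geq c_\cut \Theta_n$, so $(1+\lambda_n\rho_0)^{-1}\Theta_n\leq 1/(c_\cut \rho_0)$, yielding a bound of $\sqrt{K}/(c_\cut\rho_0)$. The claim follows.

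For Part (ii), the plan is to split $V^{(k)}_n=(X^{(k)}_n,Y^{(k)}_n)$ and treat the two components separately. The observed component satisfies $|X^{(k)}_n|^2\leq \rho_0^{-1}|HV^{(k)}_n|^2$, which by Young's inequality and Part (i) is bounded by $(1+\eps)\rho_0^{-1}|Z^{(k)}_n|^2$ plus a deterministic constant depending only on $K,M_1,c_\cut,\rho_0$. Expanding $Z^{(k)}_n=HU_n+\xi_n+\xi^{(k)}_n$, applying Young's inequality again with a small parameter, and using Assumption \ref{aspt:kinetic} for $\mathbb{E}_{n-1}|U_n|^2$, one obtains
\[
\sum_{k=1}^K\mathbb{E}_{n-1}|X^{(k)}_n|^2 \leq (1+O(\eps))\,K\rho_0^{-1}\|H\|^2(1-\beta_h)|U_{n-1}|^2 + D_X.
\]
For the unobserved component, I first show that $|Y^{(k)}_n-\widehat{Y}^{(k)}_n|$ is deterministically bounded by a constant $C'$. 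The update reads $Y^{(k)}_n-\widehat{Y}^{(k)}_n = -\Bhat_n^T H_0^T(I+H_0\widetilde{C}_n^X H_0^T)^{-1}(H_0\widehat{X}^{(k)}_n-Z^{(k)}_n)$, and I again perform a case analysis: if inflation is not triggered then $\|\Bhat_n\|\leq M_2$ and the innovation is at most $\sqrt{K}M_1$; if inflation is triggered, then $\lambda_n\geq c_\cut\Theta_n(1+\Xi_n)\geq c_\cut\Theta_n \Xi_n$, so the operator norm of $\Bhat_n^T H_0^T(I+H_0\widetilde{C}_n^X H_0^T)^{-1}$ is at most $\Xi_n\|H_0\|/(1+\lambda_n\rho_0)$, and multiplying by the innovation bound $\sqrt{K}\Theta_n$ gives $\sqrt{K}\|H_0\|/(c_\cut\rho_0)$. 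Then Young's inequality with parameter $\eps'=\beta_h/(2(1-\beta_h))$ converts this into $|Y^{(k)}_n|^2\leq (1+\eps')|\widehat{Y}^{(k)}_n|^2 + (1+\eps'^{-1})C'^2$, so by Assumption \ref{aspt:kinetic},
\[
\sum_{k=1}^K\mathbb{E}_{n-1}|Y^{(k)}_n|^2 \leq (1-\tfrac12\beta_h)\sum_{k=1}^K|V^{(k)}_{n-1}|^2 + D_Y.
\]

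Combining the above with $\alpha:=4K\rho_0^{-1}\beta_h^{-1}\|H\|^2$ and Assumption \ref{aspt:kinetic} once more for the $\alpha|U_n|^2$ term, the coefficient of $|U_{n-1}|^2$ in $\mathbb{E}_{n-1}\CE_n$ is $(1-\beta_h)[\alpha+(1+O(\eps))K\rho_0^{-1}\|H\|^2]$, which the explicit choice of $\alpha$ makes at most $(1-\tfrac12\beta_h)\alpha$ for $\eps$ small enough (the condition reduces to $(1-\beta_h)(1+\eps)^3\leq 2$). The coefficient on $\sum_k|V^{(k)}_{n-1}|^2$ is $(1-\tfrac12\beta_h)$ directly, and the remaining additive terms are bounded constants. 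Iterating the recursion yields $\mathbb{E}\CE_n\leq 2D/\beta_h + \mathbb{E}\CE_0$, giving the time-uniform estimate. The principal obstacle is the delicate constant bookkeeping: the factor $4$ and the $\beta_h^{-1}$ in the definition of $\alpha$ are needed precisely so that the dissipation $(1-\beta_h)$ from the underlying model, inflated by the $\|H\|^2$ factors coming from the $X$-to-$Z$ expansion, can be absorbed with room to spare into the improved rate $(1-\tfrac12\beta_h)$; the computation only closes if the two Young's inequalities (one on $Z^{(k)}_n$, one on $Y^{(k)}_n$) are applied with small parameters rather than the naive $(a+b)^2\leq 2a^2+2b^2$.
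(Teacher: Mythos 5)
Your proposal is correct and follows essentially the same route as the paper's proof: the resolvent identity $HV^{(k)}_n-Z^{(k)}_n=(I+H_0\widetilde{C}^X_nH_0^T)^{-1}(H\widehat{V}^{(k)}_n-Z^{(k)}_n)$ together with the lower bound $I+H_0\widetilde{C}^X_nH_0^T\succeq(1+\lambda_n\rho_0)I$ and a case analysis on the trigger for part (i), then the observed/unobserved split, the almost-sure bound on the $Y$-increment via the product $\Theta_n\Xi_n$ against the $(1+\Xi_n)$ factor in $\varphi$, and a Young inequality with parameter of order $\beta_h$ combined with a multiple of the model energy principle for part (ii). The only quibble is your closing remark: the paper in fact closes the $X$-component estimate with the crude factor-$2$ Young inequality (the slack built into the coefficient $4K\rho_0^{-1}\beta_h^{-1}\|H\|^2$ absorbs it), so only the Young inequality applied to the $Y$-component genuinely requires a small parameter.
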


\begin{rem}
It will be clear from the proof that the above result also holds verbatim for EnKF-CAI, introduced in \eqref{e:enkf_cai}. 
\end{rem}

%

A similar energy principle also holds for the adaptive inflation modifications of ESRF. Note that only the second part of the result holds for the ESRF case.  

\begin{thm}
\label{thm:adESRF}
Let $\{V_n^{(k)}\}_{k=1}^K$ denote the ETKF-AI / EAKF-AI ensemble described by \eqref{sys:ESQFad} , \eqref{e:esrf_inflation} with any inflation strength $\lambda_n \geq \varphi(\Eo,\Cu)$, then Theorem \ref{thm:adEnKF} (ii) holds. 
\end{thm}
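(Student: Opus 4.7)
The plan is to mirror the proof of Theorem~\ref{thm:adEnKF}(ii), modified for the ESRF mean-and-spread decomposition. Writing $V_n^{(k)}=\overline{V}_n+S_n^{(k)}$ with $\sum_k S_n^{(k)}=0$, one has $\sum_k|V_n^{(k)}|^2=K|\overline{V}_n|^2+(K-1)\operatorname{tr}(C_n)$. Because the spread update \eqref{sys:ESRFspread} uses the \emph{uninflated} Kalman identity, $C_n\leq\Chat_n$ in the PSD sense, so
\[
\sum_k|V_n^{(k)}|^2 \leq K|\overline{V}_n|^2+(K-1)\operatorname{tr}(\Chat_n) = K\bigl(|\overline{V}_n|^2-|\overline{\Vhat}_n|^2\bigr)+\sum_k|\Vhat_n^{(k)}|^2 .
\]
The forecast energy principle $\mathbb{E}_{n-1}\sum_k|\Vhat_n^{(k)}|^2\leq(1-\beta_h)\sum_k|V_{n-1}^{(k)}|^2+KK_h$ handles the second summand, reducing the problem to bounding $|\overline{V}_n|^2$ in the same spirit that Theorem~\ref{thm:adEnKF}(ii) bounds $\sum_k|V_n^{(k)}|^2$.

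The mean update \eqref{sys:ESRFmean} is formally an EnKF-AI step applied to the single point $\overline{\Vhat}_n$ with the noiseless observation $Z_n$. A Woodbury manipulation gives $H\overline{V}_n-Z_n=(I+A)^{-1}(H\overline{\Vhat}_n-Z_n)$ with $A=H\Ctilde_n H^T\geq\lambda_n\rho_0 I$, and Jensen yields $|H\overline{\Vhat}_n-Z_n|^2\leq\Eo$. A case analysis on whether $\Eo>\Me$ or $\Cu>\Mc$ activates the cutoff in $\cut$ produces, exactly as in Theorem~\ref{thm:adEnKF}(i), an almost sure innovation bound $|H\overline{V}_n-Z_n|\leq M_0$ with $M_0=M_0(\Me,c_\cut,\rho_0)$; hence $|\overline{X}_n|^2\leq\rho_0^{-1}|H_0\overline{X}_n|^2\leq 2\rho_0^{-1}(|Z_n|^2+M_0^2)$.

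For the unobserved block the identity $\overline{Y}_n-\overline{\Yhat}_n=-\Bhat_n^T H_0^T(I+A)^{-1}(H_0\overline{\Xhat}_n-Z_n)$ is combined with the PSD block factorization $\Bhat_n=(\Ctilde_n^X)^{1/2}\tilde M(\Chat_n^Y)^{1/2}$ with $\|\tilde M\|\leq 1$, valid because $\Chat_n+\lambda_n\operatorname{diag}(I_q,0_{d-q})$ is PSD. Expanding gives
\[
|\overline{Y}_n-\overline{\Yhat}_n|^2 \leq \|\Chat_n^Y\|\cdot (H_0\overline{\Xhat}_n-Z_n)^T A(I+A)^{-2}(H_0\overline{\Xhat}_n-Z_n).
\]
The spectral estimate $\|A(I+A)^{-2}\|\leq 1/4$, sharpened to $(\lambda_n\rho_0)^{-1}$ when $\lambda_n$ is large, combined with $|H_0\overline{\Xhat}_n-Z_n|^2\leq\Eo$ and the same cutoff analysis, produces $|\overline{Y}_n-\overline{\Yhat}_n|^2\leq C_1\|\Chat_n^Y\|$ almost surely, where $C_1=C_1(\Me,c_\cut,\rho_0)$ can be made arbitrarily small by appropriate choices of these constants. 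Since $\|\Chat_n^Y\|\leq(K-1)^{-1}\sum_k|\Vhat_n^{(k)}|^2$, the quantity $K|\overline{Y}_n-\overline{\Yhat}_n|^2$ is bounded by a tunably small multiple of the forecast energy.

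Putting everything together via Young's inequality $|\overline{Y}_n|^2\leq(1+\epsilon)|\overline{\Yhat}_n|^2+(1+\epsilon^{-1})|\overline{Y}_n-\overline{\Yhat}_n|^2$ and the Jensen estimate $K|\overline{\Yhat}_n|^2\leq\sum_k|\Vhat_n^{(k)}|^2$ yields
\[
\sum_k|V_n^{(k)}|^2 \leq (1+\delta)\sum_k|\Vhat_n^{(k)}|^2+2K\rho_0^{-1}|Z_n|^2+D_1,
\]
with $\delta$ arbitrarily small for suitable $\epsilon$ and $c_\cut$. Taking $\mathbb{E}_{n-1}$ and invoking the forecast energy principle together with $\mathbb{E}_{n-1}|Z_n|^2\leq\|H\|^2[(1-\beta_h)|U_{n-1}|^2+K_h]+q$, the coefficient $4K\rho_0^{-1}\beta_h^{-1}\|H\|^2$ of $|U_n|^2$ in $\CE_n$ is calibrated precisely so that the $2K\rho_0^{-1}|Z_n|^2$ term is absorbed into the slack in the $(1-\beta_h)\to(1-\tfrac{1}{2}\beta_h)$ contraction of $|U_n|^2$. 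The principal obstacle is the unobserved-block estimate: a naive bound would carry a factor $\|\Bhat_n\|^2$ that can grow like $(\sum_k|\Vhat_n^{(k)}|^2)^2$ and destroy the contraction, so the crucial step is combining the PSD Schur factorization of $\Bhat_n$ with the adaptive inflation's action on $(I+A)^{-2}$ to obtain an estimate linear in the forecast spread.
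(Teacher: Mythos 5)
Your overall architecture matches the paper's: the identity $\sum_k|V_n^{(k)}|^2=K|\overline{V}_n|^2+(K-1)\operatorname{tr}(C_n)$ together with $C_n\preceq\Chat_n$, the observation that the mean update is an averaged EnKF-AI step, the Jensen/innovation bound for the observed block, and the final calibration of the $|U_n|^2$ coefficient in $\CE_n$ are all exactly what the paper does. The genuine gap is in your treatment of the unobserved block. You replace the paper's direct use of $\Cu=\|\Bhat_n\|$ by the Schur factorization $\Bhat_n=(\Ctilde_n^X)^{1/2}\tilde M(\Chat_n^Y)^{1/2}$ and claim the almost sure bound $|\overline{Y}_n-\overline{\Yhat}_n|^2\leq C_1\|\Chat_n^Y\|$ with a tunably small constant $C_1$. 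This does not hold. Writing $A=H_0\Ctilde_n^XH_0^T$, your estimate gives
\[
|\overline{Y}_n-\overline{\Yhat}_n|^2\leq \|\Chat_n^Y\|\,\bigl\|A(I+A)^{-2}\bigr\|\,|H_0\overline{\Xhat}_n-Z_n|^2 \leq \|\Chat_n^Y\|\,\frac{K\Eo^2}{1+\rho_0\lambda_n}\;,
\]
and in the triggered regime $\lambda_n\geq c_\cut\Eo(1+\Cu)$ cancels only one power of $\Eo$, leaving a bound of order $\rho_0^{-1}c_\cut^{-1}\Eo\,\|\Chat_n^Y\|$, which is unbounded and grows superlinearly in the forecast energy, so it cannot be absorbed into the linear contraction. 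In the untriggered regime your constant is of order $\Me^2$, which is not ``arbitrarily small'' for the arbitrary thresholds $\Me,\Mc>0$ that the theorem permits; shrinking $\Me$ or enlarging $c_\cut$ to make $\delta$ small would prove a strictly weaker statement than the one claimed.

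The factorization discards precisely the structure the adaptive inflation was designed to exploit: $\varphi(\Eo,\Cu)=c_\cut\Eo(1+\Cu)\unit_{\{\cdot\}}$ carries the factor $(1+\Cu)$ so that a single operator norm $\|(I+A)^{-1}\|$ simultaneously kills the product $\Eo\Cu$, and the trigger fires whenever \emph{either} statistic exceeds its threshold. The paper therefore keeps $\|\Bhat_n\|=\Cu$ explicit and bounds
\[
|\widehat{B}^T_nH_0^T(I+A)^{-1}(H_0\overline{\Xhat}_n-Z_n)|^2\leq \Cu^2\|H_0\|^2\|(I+A)^{-1}\|^2\, K\Eo^2\leq K\|H_0\|^2\max\{\Me^2\Mc^2,\rho_0^{-2}c_\cut^{-2}\}\;,
\]
an absolute constant valid almost surely for every choice of $\Me,\Mc,c_\cut>0$ (this is \eqref{tmp:bh} applied to the ensemble average via Jensen). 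With this replacement your argument closes exactly as the paper's does; without it, the unobserved-block term cannot be controlled.
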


\begin{rem}
As above, it will be clear from the proof that the above result also holds verbatim for ETKF-CAI / EAKF-CAI introduced in \eqref{e:esrf_inflation}. 
\end{rem}

We now provide the proof of Theorem \ref{thm:adEnKF}, the proof Theorem \ref{thm:adESRF} is similar and is therefore deferred until the appendix. 

%

\begin{proof}[Proof for Theorem \ref{thm:adEnKF}] Before proceeding, recall the `observed-unobserved' notation from Section \ref{s:enkf_algo}:
\[
H = (H_0 , 0)\;, \quad V_n^{(k)}=(X_n^{(k)},Y_n^{(k)})\;,\quad {\Ctilde}_n=\begin{bmatrix}
{\Ctilde}^X_n, & \widetilde{B}_n\\
\widetilde{B}^T_n, & \widetilde{C}^Y_n
\end{bmatrix},
\]
\[
\Vhat_n^{(k)}=(\Xhat_n^{(k)},\Yhat_n^{(k)}) \;, \quad \widehat{C}_n=\begin{bmatrix}
\widehat{C}^X_n, & \widehat{B}_n\\
\widehat{B}^T_n, & \widehat{C}^Y_n\;.
\end{bmatrix}
\]
We start with part (i). From the assimilation equation \eqref{sys:EnKFad}, by applying $H$ to both sides, rearranging and applying the identity $H\Ctilde_n H^T = H_0 \Ctilde_n^X H_0^T$ we obtain
\begin{equation}
\label{tmp:vz}
HV^{(k)}_n-Z^{(k)}_n
=(I+H_0\widetilde{C}^X_nH_0^T)^{-1}(H\widehat{V}^{(k)}_n-Z^{(k)}_n). 
\end{equation}
Since the minimum eigenvalue of matrix $H_0H_0^T$ is $\rho_0>0$ and $H_0\widehat{C}^X_n H_0^T$ is positive semi-definite, we have
\[
I+H_0\widetilde{C}_n^XH_0^T=I+H_0\widehat{C}^X_n H_0^T+\cut(\Eo,\Cu) H_0H_0^T
\succeq (1+\rho_{0}\cut(\Eo,\Cu)) I_d\;.
\]
Therefore, when $\Eo\leq \Me$, we have
\[
|(I+H_0\widetilde{C}^X_nH_0^T)^{-1}(H\Vhat^{(k)}_n-Z^{(k)}_n)|
\leq  \frac{|H\Vhat^{(k)}_n-Z^{(k)}_n|}{1+\rho_{0}\cut(\Eo,\Cu)} \leq {|H\Vhat^{(k)}_n-Z^{(k)}_n|} \leq {\sqrt{K}}\Me\;. 
\]
On the other hand, when $\Eo> \Me$, we have
\begin{equs}
|(I+H_0\widetilde{C}_n^XH_0^T)^{-1}(H\Vhat^{(k)}_n-Z^{(k)}_n)| 
 \leq \frac{|H\Vhat^{(k)}_n-Z^{(k)}_n|}{1+\rho_{0}\cut(\Eo,\Cu)} &= \frac{|H\Vhat^{(k)}_n-Z^{(k)}_n|}{\rho_{0}c_\varphi \Eo (1 + \Cu) }  \\ &\leq \frac{|H\Vhat^{(k)}_n-Z^{(k)}_n|}{\rho_{0}c_\varphi \Eo } \\ 
& \leq \sqrt{K}\rho_0^{-1}c_\cut^{-1}. 
\end{equs}
This bound and \eqref{tmp:vz} yield claim (i).  
\par
We now move on to part (ii), starting by bounding the observed part. Starting from the expression \eqref{e:X_update} and applying part (i), Young's inequality and the independence of $\xi_n^{(k)}$, we obtain
\begin{align}
\notag
\mathbb{E}_{n-1}|X^{(k)}_n|^2&\leq 2\mathbb{E}_{n-1}\left(|X^{(k)}_n-H_0^{-1}Z_n^{(k)}|^2+|H^{-1}Z_n^{(k)}|^2\right)\\
\notag
&\leq 2\rhom^{-1}\mathbb{E}_{n-1}(|HV^{(k)}_n-Z_n^{(k)}|^2+|Z_n^{(k)}|^2)\\
\notag
&\leq 2\rhom^{-1}K\max\{\Me^2,\rhom^{-2}c_\cut^{-2}\}+2\rhom^{-1}\|H\|^2\mathbb{E}_{n-1}|U_n|^2
+2\rhom^{-1}\mathbb{E}_{n-1}|\xi_n^{(k)}+\xi_n|^2\\
\label{tmp:xk}
&=2\rhom^{-1}K\max\{\Me^2,\rhom^{-2}c_\cut^{-2}\}+2\rhom^{-1}\|H\|^2\mathbb{E}_{n-1}|U_n|^2
+4\rhom^{-1}d.
\end{align}
For the unobserved part, starting from \eqref{e:Y_update} and applying Young's inequality (Lemma \ref{lem:young}) 
\begin{equation}
\label{tmp:Yj2}
|Y^{(k)}_n|^2\leq (1+\frac{1}{2}\beta_h)|\widehat{Y}^{(k)}_n|^2+(1+2\beta_h^{-1})|\widehat{B}^T_nH_0^T(I+H_0\widetilde{C}^X_nH_0^T)^{-1}(H_0\widehat{X}^{(k)}_n-Z^{(k)}_n)|^2\;,
\end{equation}
where $\beta_h \in (0,1)$ is the one appearing in Assumption \ref{ass:energy}. For the first term on the right hand side, using Assumption \ref{aspt:kinetic} and the elementary inequality $(1+\frac{1}{2}\beta_h)(1-\beta_h)\leq (1-\frac{1}{2}\beta_h)$ we have
\[
\mathbb{E}_{n-1}(1+\frac{1}{2}\beta_h)|\widehat{Y}_n^{(j)}|^2
\leq (1+\frac{1}{2}\beta_h)\mathbb{E}_{n-1}|\Vhat_n^{(j)}|^2
\leq (1-\frac{1}{2}\beta_h) |V_{n-1}^{(j)}|^2+(1+\frac{1}{2}\beta_h)K_h\;.
\]
Since $\Cu=\|\widehat{B}_n\|$, the second part of \eqref{tmp:Yj2} is bounded by
\begin{equs}
|\widehat{B}^T_nH_0^T(I+H_0\widetilde{C}^X_nH_0^T)^{-1}(H_0\widehat{X}^{(j)}_n-Z^{(j)}_n)|^2
&\leq\Cu^2\|H_0\|^2\|(I+H_0\widetilde{C}^X_nH_0^T)^{-1}\|^2 |H_0\widehat{X}^{(j)}_n-Z^{(j)}_n|^2 \\ 
&\leq K \Eo^2 \Cu^2\|H_0\|^2\|(I+H_0\widetilde{C}^X_nH_0^T)^{-1}\|^2 
\end{equs}
where in the last inequality we used the bound $|H_0\widehat{X}^{(j)}_n-Z^{(j)}_n|^2\leq K\Eo^2$. 
%
Notice that, as in the proof of part (i)
\[
I+H_0\Ctilde^X_nH_0^T
\succeq (1+\lambda_n\rhom) I_q\;.
\]
Hence, when $\Eo>\Me$ or $\Cu>\Mc$ we have
\[
\|(I+H_0\widetilde{C}^X_nH^T_0)^{-1}\|\Eo\Cu \leq \frac{\Eo\Cu}{1+\rhom c_\cut \Eo (\Cu+1))}\leq \rhom^{-1} c_\cut^{-1}\;,
\]
and when $\Eo\leq \Me$ and $\Cu\leq \Mc$, 
\[
\|(I+H_0\widetilde{C}^X_nH^T_0)^{-1}\|\Eo\Cu \leq \Eo\Cu\leq \Me \Mc\;.
\]
In conclusion, the following holds almost surely 
\begin{equation}
\label{tmp:bh}
|\widehat{B}^T_nH_0^T(I+H_0\widetilde{C}^X_nH_0^T)^{-1}(H_0\widehat{X}^{(j)}_n-Z^{(j)}_n)|^2
\leq  K\|H_0\|^2\max\{\Me^2\Mc^2, \rhom^{-2}c_\cut^{-2}\}\;.
\end{equation}
By combining the above estimates with \eqref{tmp:Yj2}, the sum of \eqref{tmp:xk} and \eqref{tmp:Yj2} over all $k$ yields the following inequality with a constant  $D_2>0$
\begin{equation}
\label{tmp:vu2}
\mathbb{E}_{n-1}\sum_{k=1}^K|V_n^{(k)}|^2
-2K\rhom^{-1}\|H_0\|^2\mathbb{E}_{n-1}|U_n|^2\leq (1-\frac{1}{2}\beta_h)\sum_{k=1}^K|V_{n-1}^{(k)}|^2+D_2. 
\end{equation}
Then notice a multiple of Assumption \ref{aspt:kinetic} is 
\[
4K\beta_h^{-1}\rho_0^{-1}\|H\|^2\mathbb{E}_{n-1}|U_n|^2\leq 4K\beta_h^{-1}\rho_0^{-1}\|H\|^2(1-\beta_h)|U_{n-1}|^2+4K\beta_h^{-1}\rho_0^{-1}\|H\|^2K_h. 
\]
The sum of the two previous two inequalities yields
\begin{align*}
\mathbb{E}_{n-1}\CE_n&=\sum_{k=1}^K\mathbb{E}_{n-1}|V^{(k)}_n|^2+
4K\beta_h^{-1}\rho_0^{-1}(1-\frac{1}{2}\beta_h)\|H\|^2\mathbb{E}_{n-1}|U_n|^2\\
&=\sum_{k=1}^K\mathbb{E}_{n-1}|V^{(k)}_n|^2-2K\rho_0^{-1}\|H\|^2\mathbb{E}_{n-1}|U_n|^2+4K\beta_h^{-1}\rho_0^{-1}\|H\|^2\mathbb{E}_{n-1}|U_n|^2\\
&\leq {(1-\frac{1}{2}\beta_h)\sum_{k=1}^K|V_{n-1}^{(k)}|^2} + 4K\beta_h^{-1}\rho_0^{-1}\|H\|^2(1-\beta_h)|U_{n-1}|^2+D \\
& \leq {(1-\frac{1}{2}\beta_h)\sum_{k=1}^K|V_{n-1}^{(k)}|^2} + 4K\beta_h^{-1}\rho_0^{-1}\|H\|^2(1-\frac{1}{2}\beta_h)^2|U_{n-1}|^2+D = (1-\frac{1}{2}\beta_h) \CE_{n_1} + D_2\;,
\end{align*}
with $D:=4K\beta_h^{-1}\rho_0^{-1}\|H\|^2K_h+D_2$ and using the fact that $(1-\beta_h) \leq (1-\frac{1}{2}\beta_h)^2  $.
\end{proof}

\section{Geometric ergodicity}\label{s:ergo}
A stochastic process is geometrically ergodic if it converges to a unique invariant measure geometrically fast. In this section we will show that the signal-ensemble process (for each filter introduced in this article) is geometrically ergodic. In particular, if $P$ denotes the Markov transition kernel for the signal ensemble process, then we will show that there exists a constant $\gamma \in (0,1)$ such that
\begin{equation}
\|P^n \mu - P^n \nu\|_{TV} \leq C_{\mu,\nu}\gamma^n\;,
\end{equation}
where $\mu,\nu$ are two arbitrary initial probability distributions, $C_{\mu,\nu}$ is a time uniform constant that depends on $\mu,\nu$, and $\|\cdot\|_{TV}$ denotes the total variation norm. 
\par
Geometric ergodicity is a notion of stability for the filter. In particular it implies that discrepancies in the initialization of the filter will dissipate exponentially quickly. 
For the linear Kalman filter and optimal filters, it is known that under mild assumptions geometric ergodicity is extended from model to filter \cite{Jaz72, TvH12}. 
\par
In theorems 5.5, 5.6 and 5.8 of  \cite{TMK15non}, the authors proved that if the system noise $\zeta_n$ is non-degenerate and the signal-ensemble process has an energy principle,  then EnKF, ETKF and a natural version of EAKF will be geometrically ergodic. In particular, non degeneracy for the system noise $\zeta_n$ means the following. 
\begin{aspt}[Nondegenerate system noise]
\label{aspt:density}
For any constants $R_1, R_2>0$, there is a constant $\alpha>0$ such that 
\[
\mathbb{P}(\zeta_n\in \cdot \; |U_{n-1}=u)\geq \alpha\lambda_{R_2}(\cdot)
\]
for all $|u|\leq R_1$, where $\lambda_{R_2}(dx)$ is the Lebesgue measure of $\mathbb{R}^d$ restricted to $\{u: |u|\leq R_2\}$.
\end{aspt}
Assumption \ref{aspt:density} holds for many practical examples. When $U_n$ is produced by time discretization of an SDE \eqref{sys:flow}, it suffices to require $\Sigma$ being nonsingular, see  Appendix B in \cite{TMK15non} for a detailed discussion. 
\par
In contrast with the ergodicity results of \cite{TMK15non}, we no longer require the assumption of a Lyapunov function with compact sub-level sets, since this is guaranteed by Theorems \ref{thm:adEnKF} and \ref{thm:adESRF}. This is a vast improvement as in the non-adaptive case such Lyapunov functions are only known to exist in the (essentially) fully observed scenario. 
\par
Before stating the result, recall that $\Me, \Mc > 0$ are the thresholds used to decide when to trigger the adaptive inflation mechanism. For EnKF-AI and ETKF-AI, the thresholds have no constraints besides being positive, but for EAKF-AI we require that the constants are sufficiently large.  
%
\begin{thm}
\label{thm:geometric}
If the system noise in the signal process is non-degenerate (Assumption \ref{aspt:density}) then the signal-ensemble processes $(U_n, V^{(1)}_n,\ldots, V^{(K)}_n)$ generated by EnKF-AI (with any $\Me,\Mc > 0$), ETKF-AI (with any $\Me,\Mc > 0$) or a version of EAKF-AI (with $\Me,\Mc$ sufficiently large) are geometrically ergodic. 
\end{thm}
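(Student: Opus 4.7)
The plan is to invoke a Harris-type ergodicity theorem of the Meyn--Tweedie flavor, which has two hypotheses: a Lyapunov function with compact sub-level sets, and a minorization (small set) condition on every sub-level set of the Lyapunov function. Since Theorems \ref{thm:adEnKF}(ii) and \ref{thm:adESRF} already supply the Lyapunov function $\mathcal{E}_n$ with the geometric drift inequality $\mathbb{E}_{n-1}\mathcal{E}_n\leq (1-\tfrac{1}{2}\beta_h)\mathcal{E}_{n-1}+D$, and since $\mathcal{E}_n$ is manifestly coercive in $(U_n,V_n^{(1)},\dots,V_n^{(K)})$, its sub-level sets are compact in the full signal--ensemble state space. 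This is exactly the missing ingredient that in \cite{TMK15non} was only available under the observable energy criterion; here it is free, for arbitrary $H$. The bulk of the proof is therefore in verifying minorization, and the strategy is to re-use the minorization arguments of \cite{TMK15non} essentially verbatim, since Assumption \ref{aspt:density} still supplies the necessary noise non-degeneracy.

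For EnKF-AI, I would first observe that the one-step transition from $(U_{n-1},V_{n-1}^{(k)})_{k\leq K}$ to $(U_n,V_n^{(k)})_{k\leq K}$ is a smooth deterministic function of the independent noises $(\zeta_n,\zeta_n^{(k)},\xi_n,\xi_n^{(k)})_{k\leq K}$, each of which has a density bounded below by a multiple of Lebesgue measure on bounded sets, by Assumption \ref{aspt:density} applied to $\zeta_n$ and by Gaussianity for $\xi_n^{(k)}$. Conditional on the previous state lying in a fixed sub-level set $\{\mathcal{E}_{n-1}\leq R\}$, one bounds $\lambda_n$ above by a constant $\lambda_R$ (since $\Theta_n$ and $\Xi_n$ are continuous functions of a bounded state), hence the matrix $(I+H\widetilde{C}_nH^T)^{-1}$ stays uniformly bounded, the analysis map is smooth in the noise variables, and its Jacobian is uniformly nondegenerate. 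A standard change of variables then produces a positive lower bound on the transition density with respect to Lebesgue measure on the image, giving the small set condition.

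For ETKF-AI and EAKF-AI, the analysis step is deterministic in the noise, so the only randomness in one step comes from $\zeta_n^{(k)}$ and $\zeta_n$. I would follow the scheme of Theorems 5.6 and 5.8 in \cite{TMK15non}: write each $V_n^{(k)}$ as a smooth function of $V_{n-1}^{(k)}$ and the forecast noises, then verify that on the sub-level set $\{\mathcal{E}_{n-1}\leq R\}$ the Jacobian of this map is uniformly non-singular. For ETKF-AI the transform matrix $T_n$ depends smoothly on $\widehat{C}_n$ and is uniformly bounded with bounded inverse on $\{\mathcal{E}_{n-1}\leq R\}$, so the argument goes through with the cutoff $\lambda_n$ causing only a harmless perturbation in the mean update. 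For EAKF-AI, the adjustment $A_n$ may degenerate when the ensemble spread becomes ill-conditioned, and this is the one place where the size of $\Me,\Mc$ matters: choosing them sufficiently large ensures that on any fixed sub-level set of $\mathcal{E}_n$ the inflation is either inactive or small enough that the original EAKF invertibility argument of \cite{TMK15non} carries over unchanged.

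The main obstacle is exactly this last point: checking that for EAKF-AI the inflation mechanism does not compromise the delicate nondegeneracy of the $A_n$ construction that underlies the EAKF minorization. The resolution is to pick $\Me,\Mc$ sufficiently large depending on the sub-level set under consideration, exploiting the explicit drift constant $D$ from Theorem \ref{thm:adEnKF}(ii) to bound the range of relevant $\mathcal{E}_n$ values in stationarity; this is the origin of the sufficient-largeness hypothesis in the statement. Once the Lyapunov function and the minorization on its sub-level sets are in hand, the geometric ergodicity conclusion $\|P^n\mu-P^n\nu\|_{TV}\leq C_{\mu,\nu}\gamma^n$ is an immediate consequence of the Harris theorem, and is in fact the precise conclusion of the corresponding theorems in \cite{TMK15non}, so the result can be cited as a corollary of that framework once the two hypotheses are in place.
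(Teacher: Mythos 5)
Your overall skeleton matches the paper's: geometric ergodicity via Theorem \ref{thm:MS02} (a Lyapunov function with compact sub-level sets, supplied by Theorems \ref{thm:adEnKF} and \ref{thm:adESRF}, plus a minorization condition). The gap is in how you propose to verify minorization. You want a \emph{uniform} lower bound on the transition density over the whole sub-level set $C_R$, obtained from smoothness of the analysis map in the noise variables together with a uniformly non-degenerate Jacobian. This fails on two counts. First, the adaptive analysis map is not smooth: the inflation $\lambda_n=\cut(\Eo,\Cu)$ contains the indicator $\unit_{\{\Eo>\Me\;{\rm or}\;\Cu>\Mc\}}$ and therefore jumps across the threshold set, so $\Gamma$ is not $C^1$ there; and even for the unmodified filters the Jacobian is not uniformly non-degenerate on $C_R$ (it degenerates, e.g., when the forecast ensemble collapses, which is precisely why \cite{TMK15non} must choose the intermediate point for EAKF so carefully). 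The paper's argument is local rather than uniform: by Lemma \ref{lem:twosteps} it suffices to exhibit a \emph{single} intermediate point $y^*$ that is reachable from all of $C_R$ (which follows from Assumption \ref{aspt:density}) and near which $\Gamma$ is $C^1$ with non-degenerate Jacobian. One then picks $y^*$ so that $\Eo<\Me$ and $\Cu<\Mc$ throughout a neighborhood of $y^*$; there the inflation vanishes identically, $\Gamma$ coincides with the non-adaptive map, and non-degeneracy is inherited verbatim from Proposition \ref{prop:control}. For EnKF-AI and ETKF-AI the point $y^*$ is the origin, so any $\Me,\Mc>0$ works.

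Second, your explanation of the largeness hypothesis for EAKF-AI is not tenable: you propose to take $\Me,\Mc$ large ``depending on the sub-level set under consideration,'' but the thresholds are fixed parameters of the algorithm and the minorization in Theorem \ref{thm:MS02} must hold for every $R>0$, so they cannot grow with $R$. The actual reason largeness is needed is that the controllability point $y^*$ for EAKF in Proposition \ref{prop:control} is not the origin but a fixed nonzero configuration (the ensemble matrix built from $\Xi_0$); one needs $\Me,\Mc$ large enough that the inflation is untriggered in a neighborhood of \emph{that fixed point} --- a condition independent of $R$. With this localization in place the theorem is, as you anticipated, an immediate corollary of the framework of \cite{TMK15non}.
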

\begin{rem} The same result holds verbatim for adaptive filters with additional constant covariance inflation (EnKF-CAI,ETKF-CAI,EAKF-CAI), which are defined in Section \ref{s:other_filters}. 
\end{rem}

As a matter of fact, the above result is an immediate corollary of \cite{TMK15non}. The reasoning will become clear after we review the major steps in \cite{TMK15non}. 

\subsection{A controllability framework}
The central piece of the arguments is a result of Markov chain theory \cite{MT93}. Here we use a simple adaptation of the form given in \cite[Theorem 2.3]{MS02}.
\begin{thm}
\label{thm:MS02}
Let $\BX_n$ be a Markov chain in a space $E$ such that
\begin{enumerate}
\item  (Lyapunov function) There is a function $\mathcal{E}:E\mapsto \mathbb{R}^+$ with compact sub-level sets and such that
\[
\mathbb{E}_{n-1}\mathcal{E}(\BX_n)\leq (1-\beta)\mathcal{E}(\BX_{n-1})+K\;, 
\]
for some $\beta \in (0,1)$ and $K > 0$\;.
\item   (Minorization condition) Let $C_R=\{x:\mathcal{E}(x)\leq R\}$. For all $R>0$, there exists a probability measure $\nu$ on $E$ with $\nu(C_R)=1$ and a constant $\eta>0$ such that for any measurable set $A \subset E$ 
\[
\mathbb{P}(\BX_n\in A|\BX_{n-1}=x)\geq \eta\nu(A)
\]
for all $x\in C_R$. 
\end{enumerate}
Then there is a unique invariant measure $\pi$ and constants $r \in (0,1), \kappa>0$ such that
\[
\|\mathbb{P}^\mu(\BX_n\in \,\cdot\,)-\pi\|_{TV}\leq \kappa r^n \bigg(1+\int \CE(x) \mu(dx)\bigg). 
\]
\end{thm}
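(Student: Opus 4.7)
The plan is to follow the Wasserstein-contraction approach of Hairer--Mattingly (closely following the argument in \cite{MS02}), which produces geometric convergence in total variation with a prefactor that depends on the initial condition only through $\int \CE\, d\mu$. The idea is to introduce a semi-metric on probability measures that simultaneously dominates total variation and penalises Lyapunov mass, and to verify a single-step contraction for it.

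First I would select the thresholds. Choose $R$ large enough (e.g.\ $R > 4K/\beta$) so that the Lyapunov inequality yields the strict contraction $\mathbb{E}_{n-1}\CE(\BX_n)\leq (1-\tfrac{\beta}{2})\CE(\BX_{n-1})$ outside $C_R$. On $C_R$, the minorization hypothesis applies uniformly with some $\eta=\eta(R)>0$ and reference measure $\nu$, so for each $x\in C_R$ one can write $P(x,\cdot)=\eta\nu+(1-\eta)Q(x,\cdot)$ with $Q$ a sub-probability kernel. This decomposition yields an explicit coupling $\Gamma_R(x,y;\cdot,\cdot)$ of $P(x,\cdot)$ and $P(y,\cdot)$ whose diagonal carries mass at least $\eta$ whenever $x,y\in C_R$.

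Next introduce the semi-metric
\[
d_\alpha(\mu,\nu) \;=\; \inf_{\Gamma} \int \bigl(2+\alpha\CE(x)+\alpha\CE(y)\bigr)\,\unit_{\{x\neq y\}}\, \Gamma(dx,dy),
\]
where the infimum runs over couplings $\Gamma$ of $\mu,\nu$, and $\alpha>0$ is a free parameter to be tuned. By construction $\tfrac12\|\mu-\nu\|_{TV}\leq d_\alpha(\mu,\nu)\leq 2+\alpha\int\CE\, d\mu+\alpha\int\CE\, d\nu$. The heart of the proof is the one-step contraction $d_\alpha(P(x,\cdot),P(y,\cdot))\leq r\,(2+\alpha\CE(x)+\alpha\CE(y))$ for some $r\in(0,1)$ independent of $x,y$, which I would establish by a case split. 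In the ``outer'' region $\CE(x)+\CE(y)>2R$, use the product coupling: the indicator is bounded by $1$, and the Lyapunov inequality gives $\mathbb{E}[\alpha\CE(\BX_n^x)+\alpha\CE(\BX_n^y)]\leq (1-\beta)\alpha(\CE(x)+\CE(y))+2\alpha K$; since $\CE(x)+\CE(y)$ is large compared to the constants $2$ and $2\alpha K$, the combined expression is at most $r$ times $2+\alpha\CE(x)+\alpha\CE(y)$ provided $\alpha$ is small. In the ``inner'' region $\CE(x)+\CE(y)\leq 2R$, use the minorization coupling $\Gamma_R$: with probability at least $\eta$ the two chains coincide, so the indicator drops by a factor $(1-\eta)$ in expectation, giving
\[
d_\alpha(P(x,\cdot),P(y,\cdot)) \;\leq\; (1-\eta)\bigl(2+\alpha(1-\beta)(\CE(x)+\CE(y))+2\alpha K\bigr),
\]
which, after choosing $\alpha$ small enough that $2\alpha K$ is negligible against $2(1-\eta)$, can be made $\leq r\,(2+\alpha\CE(x)+\alpha\CE(y))$ for a common $r<1$.

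Iterating this single-step contraction gives $d_\alpha(P^n\mu,P^n\nu)\leq r^n d_\alpha(\mu,\nu)$ for all $\mu,\nu$ of finite $\CE$-moment. Standard Cauchy arguments then produce a unique invariant measure $\pi$, and the Lyapunov inequality forces $\int\CE\, d\pi\leq K/\beta<\infty$. Taking $\nu=\pi$ and using $\|\mu-\pi\|_{TV}\leq 2 d_\alpha(\mu,\pi)\leq 2(2+\alpha\int\CE\, d\mu+\alpha\int\CE\, d\pi)$ yields the advertised bound with $\kappa$ absorbing the constants and the fixed quantity $\int\CE\, d\pi$. The main obstacle is the simultaneous calibration of $\alpha$, $R$ and $\eta$ so that the two regimes produce a common contraction rate $r<1$: the Lyapunov-drift case forces $\alpha$ small relative to $\beta/K$, while the minorization case requires $\alpha$ compatible with $\eta$; everything else is bookkeeping.
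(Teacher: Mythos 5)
Your proposal is a correct (if schematic) proof, but it is worth noting that the paper does not prove Theorem \ref{thm:MS02} at all: it is quoted as a known result from Markov chain theory, attributed to Meyn--Tweedie and used in the form of Theorem 2.3 of Mattingly--Stuart--Higham, so the ``paper's proof'' is a citation. What you supply instead is a self-contained argument in the style of Hairer--Mattingly's weighted-Wasserstein proof of Harris' theorem: the semi-metric $d_\alpha$, the splitting $P(x,\cdot)=\eta\nu+(1-\eta)Q(x,\cdot)$ on the small set, the two-regime one-step contraction, and the identification of $\tfrac12\|\cdot\|_{TV}\le d_\alpha$ to convert the contraction into the advertised total-variation bound with prefactor $1+\int\CE\,d\mu$. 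This is a legitimate and arguably cleaner route than the classical coupling/regeneration argument underlying the cited references, and it buys a fully explicit rate $r$ in terms of $\beta$, $K$, $R$ and $\eta$. Your calibration discussion has the constraints slightly misattributed, though it does not break the argument: in the outer regime $\CE(x)+\CE(y)>2R$ the contraction holds for \emph{every} $\alpha>0$ (the relevant ratio is $(2+2\alpha(1-\tfrac{\beta}{2})R)/(2+2\alpha R)<1$, degrading only as $\alpha\to0$), provided $R$ is taken large enough that $2K\le \beta R$ absorbs the additive constant; it is the inner regime that forces $\alpha$ small relative to $\eta/(K+R)$ so that the Lyapunov growth does not eat the $(1-\eta)$ gain. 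Since the minorization hypothesis here is assumed for all $R>0$, there is no genuine tension between enlarging $R$ and retaining a positive $\eta$, and the two regimes admit a common $r<1$; the remaining steps (iteration, completeness of $d_\alpha$ on measures with finite $\CE$-moment, $\int\CE\,d\pi\le K/\beta$) are standard as you indicate.
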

Theorems \ref{thm:adEnKF} and \ref{thm:adESRF} have already provided the Lyapunov function. All that remains is to show the minorization condition. 
\par
To obtain minorization, we will exploit the update structure of the ensemble filters, precisely as was done in \cite{TMK15non}. Notice that for all ensemble filters, the signal-ensemble process $\BX_n := (U_n , V_n^{(1)},\dots,V_n^{(K)})$ is a Markov chain taking values in $\CX := \reals^{d }\times \reals^{d \times K}$. The evolution of $\BX_n$ is described by the composition of two maps. The first is a random map from $\CX$ to a signal-forecast-observation space $\CY$, described by a Markov kernel $\Phi : \CX \times \CB(\CY) \to [0,1]$. The second is a deterministic map $\Gamma : \CY \to \CX$, which combines the forecast with the observed data to produce the updated posterior ensemble. The details of these maps, as well as the definition of the intermediate space $\CY$, differs between EnKF, ETKF, EAKF and their adaptive counterparts.  
\par
For example, in EnKF,  the intermediate space is $\CY := \reals^d \times \reals^{d\times K } \times \reals^{q \times K}$ and the random mapping is 
\[
(U_{n-1} , V_{n-1}^{(1)},\dots,V_{n-1}^{(K)} ) \mapsto \BY_n :=  (U_n, \Vhat_n^{(1)},\dots,\Vhat_n^{(K)},  Z_n^{(1)},\dots,Z_n^{(K)} )\;.
\]
The deterministic map $\Gamma$ is given by the Kalman update   
\[
\Gamma (U_n , \Vhat_n^{(1)},\dots,\Vhat_n^{(K)}, Z_n^{(1)},\dots,Z_n^{(K)}) = (U_n , \Gamma^{(1)},\dots,\Gamma^{(K)})\;,
\]
where 
\begin{equation}\label{e:Gamma_enkf}
\begin{gathered}
\Gamma^{(k)} = \widehat{V}^{(k)}-\Ctilde H^T(I+H\widehat{C}H^T)^{-1}(H \widehat{V}^{(k)}-Z^{(k)})\;, \\
\widetilde{C} = \frac{1}{K-1}\sum_{k= 1}^{K}(\widehat{V}^{(k)}-\overline{\widehat{V}})\otimes (\widehat{V}^{(k)}-\overline{\widehat{V}})\;.
\end{gathered}
\end{equation}
For EnKF-AI, we have the same formulas, but with the exception
\begin{equ}
 \widetilde{C} = \frac{1}{K-1}\sum_{k= 1}^{K}(\widehat{V}^{(k)}-\overline{\widehat{V}})\otimes (\widehat{V}^{(k)}-\overline{\widehat{V}}) + c_\varphi\Eo (1+\Cu)\unit_{\Eo > \Me {\rm \;or\;} \Cu > \Mc} I \;.
\end{equ}
and 
\begin{equ}
\Eo = \sqrt{\devsum}\;, \quad \Cu = \left\|\frac{1}{K-1}\sum_{k=1}^K (\Xhat_n^{(k)}-\overline{\Xhat}_n)\otimes (\Yhat_n^{(k)}-\overline{\Yhat}_n) \right\|
\end{equ}
The corresponding formulas for ETKF, EAKF (and their adaptive counterparts) can be similarly derived and for concreteness are given in Appendix \ref{app:ergo_formulas}. Notice that for values of $\BY_n$ close to the origin, the adaptive inflation term will vanish and hence the deterministic map $\Gamma$ for EnKF-AI coincides with that of EnKF near the origin. Similar statements hold for ETKF-AI and EAKF-AI. 
\par
Given this formulation, it suffices to show that the push-forward kernel $\Gamma^* \Phi (x,\cdot)  = \Phi(x,\Gamma^{-1}(\cdot))$ satisfies the minorization condition. It is easy to see that, given the assumptions on the noise, the kernel $\Phi(x,\cdot)$ has a density with respect to Lebesgue measure, so we simply need to show that the pushforward inherits the density property from $\Phi$. This can be understood as the controllability of the map $\Gamma$.  To achieve this, we use the following simple fact, which is lemma 5.4 in \cite{TMK15non}. 
\begin{lem}
\label{lem:twosteps}
Let $\Phi$ be a Markov transition kernel from $\reals^{n} \to \reals^{n} \times \reals^m$ with a Lebesgue density $p(x,y) = p(x,(y_1,y_2))$ and let $\Gamma : \reals^n \times \reals^m \to \reals^n$. 
%
Given a compact set $C$, suppose that there is a point $y^*=(y_1^*, y_2^*) \in \reals^n \times \reals^m$ and $\beta > 0 $ such that 
\begin{enumerate}
\item Reachable from all $x\in C$, that is the density function $p(x,y)>\beta$ for $y$ around $y^*$\;,
\item $\Gamma$ is controllable around $y^*$, that is $\Gamma$ is $C^1$ near $y^*$ and $\det (\CD_{y_1} \Gamma)|_{y^*}>0$\;. 
\end{enumerate}
Then there is a $\delta>0$ and a neighborhood $O_1$ of $\Gamma(y^*)$ such that for all $x\in C$ 
\[
\Gamma^* \Phi(x, \cdot )\geq \delta\lambda_{O_1}(\cdot)
\]
where $\lambda_{O_1}$ is the Lebesgue measure restricted to the set $O_1$. In other words, the minorization condition holds for the transition kernel $\Gamma^*\Phi$. 
\end{lem}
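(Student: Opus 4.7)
The plan is to reduce the lemma to the classical change-of-variables formula via a well-chosen local diffeomorphism. First I would introduce the auxiliary map $T : \reals^n \times \reals^m \to \reals^n \times \reals^m$ defined by $T(y_1,y_2) := (\Gamma(y_1,y_2), y_2)$, which is $C^1$ near $y^*$ and whose Jacobian is block triangular with diagonal blocks $\CD_{y_1}\Gamma$ and $I_m$. Consequently $\det \CD T(y_1,y_2) = \det \CD_{y_1}\Gamma(y_1,y_2)$, which is nonzero in a neighborhood of $y^*$ by condition (2) together with continuity of the Jacobian. The inverse function theorem then supplies an open neighborhood $W$ of $y^*$ and an open neighborhood $V$ of $T(y^*) = (\Gamma(y^*), y_2^*)$ on which $T : W \to V$ is a $C^1$ diffeomorphism with $C^1$ inverse.

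Next, using condition (1), I would shrink $W$ (and correspondingly replace $V$ by $T(W)$) if necessary so that the uniform density lower bound $p(x,y) \geq \beta$ holds for every $x \in C$ and every $y \in W$. I would then select a product neighborhood $O_1 \times O_2 \subseteq V$ of $T(y^*)$ with compact closure inside $V$. For any measurable $A \subseteq O_1$, restricting the integral defining $\Gamma^*\Phi(x,A)$ to the set $T^{-1}(O_1\times O_2) \subseteq W$ and performing the change of variables $y = T^{-1}(z, y_2)$ yields
\begin{align*}
\Gamma^*\Phi(x,A) &\geq \int_{T^{-1}(O_1 \times O_2)} p(x,y)\, \unit_A(\Gamma(y))\, dy \\
&= \int_{O_1 \times O_2} p(x, T^{-1}(z,y_2))\, \unit_A(z)\, |\det \CD T^{-1}(z,y_2)|\, dz\, dy_2.
\end{align*}

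To finish, the map $(z,y_2) \mapsto |\det \CD T^{-1}(z,y_2)|$ is continuous and strictly positive on the compact closure of $O_1 \times O_2$, hence bounded below by some $\alpha > 0$. Combining this with the pointwise lower bound on $p$ and Fubini's theorem gives
\begin{equation*}
\Gamma^*\Phi(x,A) \;\geq\; \beta\alpha \int_{O_1 \times O_2} \unit_A(z)\, dz\, dy_2 \;=\; \beta\alpha\, |O_2|\, \lambda_{O_1}(A),
\end{equation*}
where $|O_2|$ denotes the Lebesgue measure of $O_2$, so setting $\delta := \beta\alpha|O_2|$ proves the claim. The only real obstacle is bookkeeping: one must shrink neighborhoods in the right order so that $T$ is a diffeomorphism on $W$, the density bound $p \geq \beta$ holds on $C \times W$, and the product set $O_1 \times O_2$ sits inside $T(W)$ simultaneously. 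Uniformity in $x \in C$ presents no additional difficulty, since condition (1) is already uniform over $C$ and the Jacobian lower bound $\alpha$ depends only on $\Gamma$.
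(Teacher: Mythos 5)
Your proof is correct. Note that the paper itself does not prove this lemma --- it is quoted verbatim as Lemma 5.4 of the cited reference \cite{TMK15non} --- so there is no in-paper argument to compare against; your reduction is the standard one for such controllability-to-minorization statements: augment $\Gamma$ to the map $T(y_1,y_2)=(\Gamma(y_1,y_2),y_2)$, whose block-triangular Jacobian has determinant $\det\CD_{y_1}\Gamma\neq 0$ near $y^*$, invoke the inverse function theorem, and push the uniform density lower bound through the change of variables. The bookkeeping you flag (shrinking $W$ so that both the diffeomorphism property and the bound $p\geq\beta$ hold, then choosing $O_1\times O_2$ compactly inside $T(W)$) is exactly the content of the argument, and uniformity in $x\in C$ does indeed come for free from the uniform reachability hypothesis; the only reading you must commit to, which you correctly do, is that condition (1) provides a single neighborhood of $y^*$ working simultaneously for all $x\in C$.
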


\subsection{Application to ensemble filters with adaptive inflation}
To apply Lemma \ref{lem:twosteps} to the proof of Theorem \ref{thm:geometric} we will use the variables
\begin{equ}
\label{eqn:Gammavaria}
\begin{gathered}
x = (U_{n-1} , V_{n-1}^{(1)},\dots,V_{n-1}^{(K)} )\\  y_1 = (U_n, \Vhat_n^{(1)},\dots,\Vhat_n^{(K)}) \\ y_2 = (Z_n, Z_n^{(1)},\dots,Z_n^{(K)} )
\end{gathered}
\end{equ}
The choice of the intermediate point $y^* = (y_1^*,y_2^*)$ can be quite delicate and can simplify the non-degeneracy condition considerably. In particular, we prove the following proposition in \cite{TMK15non}. 
\begin{prop}
\label{prop:control}
Let $\Phi$, $\Gamma$ be the Markov kernel and deterministic map used to define EnKF/ETKF/EAKF, given by the formulas in Appendix \ref{app:ergo_formulas}. If the system noise is non-degenerate (Assumption \ref{aspt:density}) then $\Phi,\Gamma$ satisfies both conditions of Lemma \ref{lem:twosteps} with the following choice of intermediate point $y^*$:
%
\begin{enumerate}
\item EnKF: $y^*$ has all its components  being at the origin.
\item ETKF: $y^*$ has all its components  being at the origin.
\item EAKF: $y^*_2$ has all its components being at the origin, $U^*$ is at the origin, while $[\Vhat^{*(1)},\ldots, \Vhat^{*(K)}]$ is  the $d \times K$ matrix given by 
\begin{equ}
\begin{cases} \begin{bmatrix} \vec{1} & \Xi_0 & 0 \end{bmatrix}  \quad \text{if $d \leq K-1$} \\  \\ 
\begin{bmatrix} \vec{1} &  \Xi_0 \\ 0 &  0 \end{bmatrix}  \quad \text{if $d \geq K-1$}  
\end{cases}
\end{equ}
where $\vec{1}$ is the $d\times 1$ vector of $1$'s and $\Xi_0$ is the $r \times r$ matrix  with $r=\min\{K-1, d\}$:
\begin{equ}
\Xi_0 = \begin{bmatrix} 1 & 1 & \dots & 1 & -r \\
\vdots & \vdots & \vdots & \iddots & 0 \\
1 & 1 & -3 & \dots & 0\\ 
1 & -2 & 0 & \dots & 0 \\
-1 & 0 &  0 & \dots & 0 
\end{bmatrix}.
\end{equ} 
Note in this case, a version of EAKF is picked so $\Gamma$ is $C^1$ near $y^*$. 
\end{enumerate}
\end{prop}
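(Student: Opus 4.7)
The plan is to verify, for each of EnKF, ETKF, and EAKF, the two hypotheses of Lemma~\ref{lem:twosteps} at the stated intermediate point $y^*$: a uniform-in-$x$ lower bound on the transition density $p(x,y)$ near $y^*$, and $C^1$ regularity of $\Gamma$ together with $\det(\mathcal{D}_{y_1}\Gamma)|_{y^*}\neq 0$. The density bound is essentially the same in all three cases; the controllability argument becomes progressively subtler and is the main obstacle for EAKF.

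For the density lower bound, note that $\Phi(x,\cdot)$ factorises into one-step conditional densities for the signal $U_n=\Psi_h(U_{n-1})+\zeta_n$, for the forecast members $\widehat{V}_n^{(k)}=\Psi_h(V_{n-1}^{(k)})+\zeta_n^{(k)}$, and for the observation variables (Gaussian in $Z_n\mid U_n$, and additionally in $Z_n^{(k)}\mid Z_n$ for EnKF). By Assumption~\ref{aspt:density} each $\zeta$-factor is bounded below by a positive constant on any compact $y$-set, uniformly in the conditioning $x$ from a compact set; the Gaussian $\xi$-factors are strictly positive everywhere. Multiplying these factors and restricting $x$ to a compact set $C$ yields $p(x,y)\geq \beta>0$ on a neighbourhood of any prescribed $y^*$, which is condition 1.

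For EnKF and ETKF, controllability is verified at $y^*=0$. There the entire forecast ensemble vanishes, so $\widehat{C}=0$ and, being quadratic in $y_1$, $\widehat{C}=O(|y_1|^2)$ in a neighbourhood. Consequently the Kalman gain $\widetilde{C} H^T(I+H\widehat{C} H^T)^{-1}$ is of order $|y_1|^2$ and contributes nothing to the first-order expansion. Hence the EnKF update \eqref{e:Gamma_enkf} linearises to $\Gamma^{(k)}=\widehat{V}^{(k)}$ and $\mathcal{D}_{y_1}\Gamma|_{y^*}$ equals the identity. For ETKF the mean update linearises identically, and for a suitable version (fixing, say, the symmetric positive-semidefinite square-root branch for $T_n$) the spread update $T_n\widehat{S}_n$ is smooth at $\widehat{S}_n=0$ with $T_n\to I$; the Jacobian is again the identity. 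In both cases $\det(\mathcal{D}_{y_1}\Gamma)|_{y^*}=1$.

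EAKF is the main obstacle. The origin is inadmissible because the matrix $A_n$, defined through $\widehat{S}_n$, fails to be $C^1$ at $\widehat{S}_n=0$. The prescribed $y^*$ is engineered so that the forecast spread $\widehat{S}^*$ has full rank $r=\min\{K-1,d\}$ via the explicit matrix $\Xi_0$, whose columns are linearly independent and whose determinant is $\pm r!$ by direct expansion along the anti-diagonal. For such $\widehat{S}^*$ one fixes a branch of solutions to the defining equation for $A_n$ (for instance via the pseudo-inverse formula), and the implicit function theorem then yields $C^1$ regularity of $A_n$, and hence of $\Gamma$, on a neighbourhood of $y^*$. Computing $\det(\mathcal{D}_{y_1}\Gamma)|_{y^*}$ reduces to a block calculation: the signal block contributes the identity, the ensemble-mean block a Kalman-type gain evaluated at $y^*$, and the spread block the derivative of $\widehat{S}_nA_n$ at $\widehat{S}^*$, whose determinant can be read off the spectral data of $\Xi_0$ and shown to be nonzero because $\Xi_0$ is well-conditioned. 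Tracking all of these terms and extracting nonvanishing from the explicit structure of $\Xi_0$ (in both sub-cases $d\leq K-1$ and $d\geq K-1$) is where essentially all of the technical work is concentrated.
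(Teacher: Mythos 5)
First, a point of comparison: the paper does not actually prove Proposition \ref{prop:control} here at all --- the sentence preceding the statement reads ``we prove the following proposition in \cite{TMK15non}'', so the proof is imported wholesale from that reference and there is no in-paper argument to match against. That said, your overall strategy --- factorise $\Phi$ into the signal, forecast and observation conditional densities to get the uniform lower bound from Assumption \ref{aspt:density}, then verify controllability by computing $\mathcal{D}_{y_1}\Gamma$ at the prescribed $y^*$ --- is exactly what the surrounding framework (Lemma \ref{lem:twosteps} together with the variable splitting \eqref{eqn:Gammavaria}) is designed for, and your treatment of EnKF and ETKF is correct and essentially complete: at $y^*=0$ the sample covariance is quadratic in $y_1$ and the innovation vanishes, so every term in the product rule for the gain contains a vanishing factor, the gain contributes nothing to first order, and $\mathcal{D}_{y_1}\Gamma|_{y^*}$ is the identity (with the symmetric positive square-root branch of $T$ being smooth near $\widehat{S}=0$).

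The gap is in the EAKF case, which is the only case where the proposition has content beyond the vanishing-gain observation. You correctly diagnose why the origin is inadmissible and why $y^*$ is engineered so that $\widehat{S}^*$ has full rank $r$, but the decisive step --- $\det(\mathcal{D}_{y_1}\Gamma)|_{y^*}\neq 0$ --- is asserted rather than proved. ``The determinant can be read off the spectral data of $\Xi_0$ and shown to be nonzero because $\Xi_0$ is well-conditioned'' is not an argument: invertibility of $\Xi_0$ (your $\det\Xi_0=\pm r!$ computation) buys you $C^1$ regularity of the adjustment matrix $A(\widehat{S})$ near $\widehat{S}^*$ once a branch of the SVD is fixed, but it does not by itself imply that the derivative of $\widehat{S}\mapsto \widehat{S}A(\widehat{S})$ is a nonsingular linear map on $d\times K$ matrices; $A$ depends on $\widehat{S}$ through the SVD and through the diagonalisation of $(K-1)^{-1}\Lambda^T Q^T H^T H Q\Lambda$, and the product-rule term $\widehat{S}\,\mathcal{D}A$ could a priori cancel $(\mathcal{D}\widehat{S})A$ on some subspace. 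The specific combinatorial structure of $\Xi_0$, not merely its conditioning, is what makes that computation go through in \cite{TMK15non}, and without carrying it out (in both sub-cases $d\leq K-1$ and $d\geq K-1$) the EAKF claim is unestablished. A second, minor omission: for EAKF the reachability condition is not ``essentially the same'' as the other two cases, because $y_1^*$ is not at the origin; one must invoke Assumption \ref{aspt:density} with $R_2$ chosen large enough that the ball about $\Psi_h(V_{n-1}^{(k)})$ covers $\widehat{V}^{*(k)}$ uniformly over $x$ in the compact set $C$. The assumption permits this since it holds for every $R_2$, but the step should be stated.
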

With this propsition at hand, Theorem \ref{thm:geometric} becomes an immediate corollary.
\begin{proof}[Proof of Theorem \ref{thm:geometric}]
For each of EnKF-AI, ETKF-AI, EAKF-AI verification of the reachability condition in Lemma \ref{lem:twosteps} is identical to that of EnKF, ETKF, EAKF which is trivial and is done in \cite[Proof of theorems 5.5, 5.6, 5.8]{TMK15non} respectively. Hence it suffices to check the non-degeneracy of $\Gamma$ in each of the three cases. For EnKF-AI and ETKF-AI we pick $y^* = (y_1^*,y_2^*)$ to be the origin. Hence for thresholds $\Me,\Mc > 0$, there exist a neighborhood of the origin such that the map $\Gamma$ coincides with that of EnKF, ETKF respectively, since the inflation mechanism is not triggered within this neighborhood. Hence the non-degeneracy follows immediately from Proposition \ref{prop:control}. For EAKF-AI, as detailed above, the intermediate point $y^*$ is not chosen at the origin but rather at the point specified in Proposition \ref{prop:control}. Nevertheless, since this point is fixed, one can pick thresholds $\Me,\Mc$ sufficiently large so that for some neighborhod of $y^*$ the inflation mechanism is not triggered and hence $\Gamma$ agrees with that of EAKF. Hence the non-degeneracy follows from Proposition \ref{prop:control}.   
\end{proof}
%
%
%
%

\section{Thresholds from elementary benchmarks}
\label{sec:choice}\label{s:choice}
Finally we discuss the choice of the cutoff thresholds $\Me$ and $\Mc$ in the formulation \eqref{e:lambda} of $\cut$. While Theorems \ref{thm:adEnKF} guarantees filter stability for any $\Me, \Mc > 0$, certain values produce better filter performance. Intuitively, these thresholds should be chosen to differentiate malfunctioning forecast ensembles from properly working ones.

\subsection{An elementary benchmark}
Here we devise an elementary benchmark of accuracy that should be surpassed by any properly functioning filter. At each observation step, the benchmark estimate is given by the stationary state $U_n$ conditioned on the observation $Z_n$. This can be computed using Bayes' formula and only requires access to the invariant measure $\pi$ of the model. For instance we can compute the conditional expectation via
\begin{equ}\label{e:bayes_integral}
\mathbb{E}(U_n|Z_n)=\frac{ \int x \pi(x)\exp(-\frac{1}{2}|Z_n-Hx|^2) dx }{\int \pi(x) \exp(-\frac{1}{2}|Z_n-Hx|^2) dx}\;.
\end{equ}
The benchmark for accuracy is given by the mean-square error of the above estimator:
 \[
\text{Error}_{A}:=\mathbb{E}|U_n-\mathbb{E}(U_n|Z_n)|^2=\mathbb{E}|U_n|^2-\mathbb{E}|\mathbb{E}(U_n|Z_n)|^2\;.
\]
A properly functioning forecast ensemble should be expected to perform better than this estimator. For instance, one should expect a properly functioning filter to satisfy
\[
\frac{1}{K}\sum_{k=1}^K\mathbb{E}\left|\Vhat^{(k)}_n-U_n\right|^2\leq \text{Error}_A\;.
\]
Consequently, one should expect that 
\[
\mathbb{E}\Eo^2=\frac{1}{K}\sum_{k=1}^K\mathbb{E}|H(\Vhat^{(k)}_n-U_n)+\xi_n+\xi_n^{(k)}|^2\leq \sigma_{\Theta}^2:=\|H\|^2\text{Error}_A+2d\;.
\]
Moreover using the inequalities $\|a\otimes b\|\leq |a||b|$, $|x||y|\leq \frac{1}{2}(x^2+y^2)$ and Lemma \ref{lem:mean} we expect
\begin{align*}
\mathbb{E}\Cu&\leq \frac{1}{K-1}\sum_{k=1}^K\mathbb{E}\left|\Xhat^{(k)}_n-\overline{\Xhat}_n\right|\left|\Yhat^{(k)}_n-\overline{\widehat{Y}}_n\right|
\leq \frac{1}{2K-2}\sum_{k=1}^K\mathbb{E}|\Vhat^{(k)}_n-\overline{\Vhat}_n|^2\\
&\leq \frac{1}{2K-2}\sum_{k=1}^K\mathbb{E}|\Vhat^{(k)}_n-U_n|^2\leq M_{\Xi}:=\frac{K}{2K-2}\text{Error}_A.  
\end{align*}
In summary, we expect that a properly functioning filter should satisfy \todo{M notation here?}
\begin{equation}
\label{eqnbench:filteronce}
\mathbb{E}\Eo^2\leq \sigma_{\Theta}^2:= \|H\|^2 \text{Error}_A+2d,\quad 
\mathbb{E}\Cu\leq M_{\Xi}:= \frac{K}{2K-2}\text{Error}_A\;.
\end{equation}
Since the integrals appearing in the Bayesian formula \eqref{e:bayes_integral} are often difficult to compute, it is more useful to have an estimator that only relies on low order statistics of $\pi$. For this purpose it is natural to introduce a Gaussian approximation of $\pi$, replacing $\pi$ with $\pitilde = \mathcal{N}(\mathbb{E} U_n, \text{cov} (U_n))$ where the mean and covariance are the true statistics computed from $\pi$. The condtional distribution of $\pitilde$ given the observation $Z_n$ is now a Gaussian measure and can be computed exactly. In particular we have that 
\[
\widetilde{\mathbb{E}}(U_n|Z_n)
=\mathbb{E}U_n-\text{cov}(U_n)H^T(I+H^T\text{cov}(U_n) H)^{-1}(H\mathbb{E}U_n-Z_n). 
\]
The  covariance of the error $r_K=U_n-\widetilde{\mathbb{E}}(U_n|Z_n)$  is given by the Kalman prior-posterior covariance relation
\begin{equation}
\label{eqnbench:covKal}
\text{cov}(r_K)=\text{cov}(U_n)-\text{cov}(U_n)H^T(I+H\text{cov}(U_n)H^T)^{-1}H\text{cov}(U_n),
\end{equation}
which can be explicit computed. The mean square error is given by 
\[
\text{Error}_{A}=\mathbb{E}|r_K|^2=\text{tr}(\text{cov}(r_K)).
\]
In the case of a linear model, this Gaussian approximation is of course exact. 

Given the above discussion, an obvious option for the thresholds is 
\begin{equation}
\label{threshold:aggressive}
\Me=\sigma_{\Theta},\quad \Mc=M_{\Xi}.
\end{equation}
we call this \emph{aggressive thresholding}. In this situation the adaptive inflation is triggered as soon as the filter statistics $\Eo, \Cu$ exceed the mean values computed from the crude estimators. A less aggressive strategy is to obtain $\Me,\Mc$ by hypothesis testing, which we do not pursue here.

\begin{rem}
To avoid wasting resources on computing these benchmarks, one could instead use the trivial estimator $U_n = 0$. In this case, one can compute
\[
\mathbb{E}|U_n-\mathbf{0}|^2=\mathbb{E}|U_n|^2\leq \beta_h^{-1}K_h\;.
\]
A properly functioning EnKF should have a smaller mean square error in its forecast. And as a consequence, we can replace term $\text{Error}_A$ in  the bounds above with $\beta_h^{-1}K_h$, which leads to  
\[
\mathbb{E}\Eo^2\leq \|H\|^2\beta_h^{-1}K_h+2d,\quad \mathbb{E}\Cu\leq \frac{K}{2K-2}\beta_h^{-1}K_h. 
\]
\end{rem}

\section{Numerical results}
\label{sec:num}\label{s:numerics}
In this section, we will numerically validate our results through the following five mode Lorenz 96 model:
\begin{equation}
\label{sys:lz96}
\frac{d}{dt}x_i=x_{i-1}(x_{i+1}-x_{i-2})-x_i+F,\quad i=1,\ldots,5.
\end{equation}
Here the indices are interpreted in modulo $5$ sense. The truth will be given by $U_n = x (n h)$ for some observation time step $h$. Previous studies from \cite{MAG05, GM13, MH12} have shown that different choice of forcing $F$ produces very different strength of turbulence.  Here we will pick forcing $F=4, 8$ and $16$,  and see that these choices create very different filtering results.

\par
Throughout the section we assume that only one component $x_1$ is observed, hence we have the observation matrix $H = [1,0,0,0,0]$. The observational noise is chosen to be distributed as $\mathcal{N}(0,0.01)$. 
%
%
We apply a selection of ensemble assimilation methods to this signal-observation system with $6$ ensemble members, where the initial positions are drawn from a  Gaussian approximation of the equilibrium measure. For the sake of simplicity, we integrate \eqref{sys:lz96} using an explicit Euler scheme with time step $\Delta=10^{-4}$. For most ensemble based filtering methods, the explicit Euler scheme is bad choice of integrator for \eqref{sys:lz96}, since the stiffness of the equation will lead to numerical blow-up for sufficiently large initial conditions. As we shall see, this leads to prevalence of catastrophic filter divergence in most ensemble based filtering methods. The adaptive inflation method on the other hand avoids this issue and exhibits high filtering skill. 
\par
Here we compare the performance of four ensemble based filters: EnKF, EnKF-AI, EnKF-CI and EnKF-CAI, which have been defined in Sections \ref{s:enkf} and \ref{s:ai}. The constant inflation is always taken to be additive.

%
%
\par
Following the methodology introduced in Section \ref{sec:choice}, we construct the equilibrium measure of the Lorenz 96 system using data from a long time ($T=10^4$) simulation, which leads to the performance benchmarks computed by \eqref{eqnbench:covKal} and \eqref{eqnbench:filteronce}. The thresholds $\Me$ and $\Mc$ are obtained through the aggressive strategy \eqref{threshold:aggressive}, using the Gaussian approximation of the stationary measure $\pi$. In order for the comparison to be unbiased, we use the same realizations of observation noise $\xi_n$ and perturbations $\xi_n^{(k)}$ in all four filters. The statistics are collected for $N=100$ independent trials, where each runs a total time length of $T=100$. To eliminate transients, we will only record data from the period $50 \leq T \leq 100$. 
\par
The performances of the four filters are measured and compared from three perspectives:
\begin{itemize}
\item The most important question here is how often does catastrophic filter divergence appear in standard (non-adaptive) filters, like EnKF and EnKF-CI, and does the adaptive inflation in EnKF-AI and EnKF-CAI prevent it. Catastrophic filter divergence can be identified by checking  whether the ensemble at the final time $T=100$ takes value ``NaN", representing machine infinity, in any of its components.
\item The overall accuracy of the filters is compared using the average of the root mean square error (RMSE) over $N=100$ trials among the second half of the time interval, where
\[
\text{RMSE}=\sqrt{\frac{2}{T}\sum_{n=T/2h}^{T/h}|\Vbar_n-U_n|^2}. 
\]
Another statistics that is useful in judging the accuracy is the averaged pattern correlation:
\[
\text{Cor}=\frac{2}{T}\sum_{n=T/2}^T\frac{\langle\Vbar_n-\overline{U}, U_n-\overline{U}\rangle}{|\Vbar_n-\overline{U}||U_n-\overline{U}|}\;,
\]
where $\Ubar$ denotes the climatological average of the true model (the mean of the invariant measure) and $\Vbar_n$ as usual denotes the average of the filter ensemble. We will also plot the of the posterior error, $|\overline{V}_n-U_n|$, for one realization of a  typical trajectory for the four filters.
%
\end{itemize}

 The following experiments have also been carried out with both ESRF methods. All these methods have performance very similar to EnKF based method, as well as the effect of adaptive inflation over them. We do not present these results for brevity. 

\subsection{Accuracy for different turbulent regimes}\label{s:num_turb}

In this section we compare the performance of the four filters across the three turbulent regimes. In each experiment we use observation interval $h = 0.05$ and (when required) constant additive inflation strength $\rho = 0.1$.

\subsubsection{Weak turbulence}

Here we compare the performance of the filters in a weakly turbulent regime, by setting $F=4$. The performances of the four filters are presented in Table \ref{tab:F4}, where we compare frequency of catastrophic filter divergence, RMSE and pattern correlation. In Figure \ref{fig:F4}, we compare the posterior error for the four filters for one typical trajectory. 
\par
We found that, among $100$ trials, none of the filters experience catastrophic filter divergence. The adaptive inflation in EnKF-AI  has  been triggered in $30$ trials, and on average $1.96$ times in each trial. In EnKF-CAI, the adaptive inflation has been turned on in $9$ trials, and only once in each trial. Hence the typical realizations of EnKF and EnKF-AI are identical and similarly for EnKF-CI and EnKF-CAI, since the adaptive inflation is seldom triggered. 
\par
In terms of accuracy, all four filters perform quite well. For each filter the RMSE is significantly lower than the benchmark RMSE, which is $3.64$ and the pattern correlation is very high. Unsurprisingly, the  filters with constant covariance inflation (EnKF-CI and EnKF-CAI) have more skill than EnKF, with lower RMSE and higher pattern correlation. Somewhat more surprising is that EnKF-AI also performs better than EnKF, both in RMSE and pattern correlation, even though the adaptive covariance inflation has only been switched on in $30$ of the $100$ trials. This indicates that the performance of EnKF must be quite bad in those $30$ and the adaptive inflation is needed to ensure the accuracy. Of course, in this regime one could also achieve this with non-adaptive constant inflation.  
\begin{table}[h]
\begin{center}
\begin{tabular}{|c| c c c c|}
\hline  
Filter &EnKF &EnKF-AI &EnKF-CI &EnKF-CAI\\
\hline
Cata. Div. &0\%  &0\% &0\% &0\%\\ 
RMSE  &0.89 &0.54 &0.22 &0.22\\
Pattern Cor. &0.91 &0.96 &0.98 &0.98\\
\hline
\end{tabular}
\caption{In the weak turbulence regime ($F=4$), for the four algorithms we compare frequency of catastrophic filter divergence (Cata. Div.), RMSE, and pattern correlation (Pattern Cor.) The benchmark RMSE is $3.25$. }
\label{tab:F4}
\end{center}
\end{table}

\begin{figure}[h]
\hspace{-2 cm}\scalebox{0.50}{\includegraphics{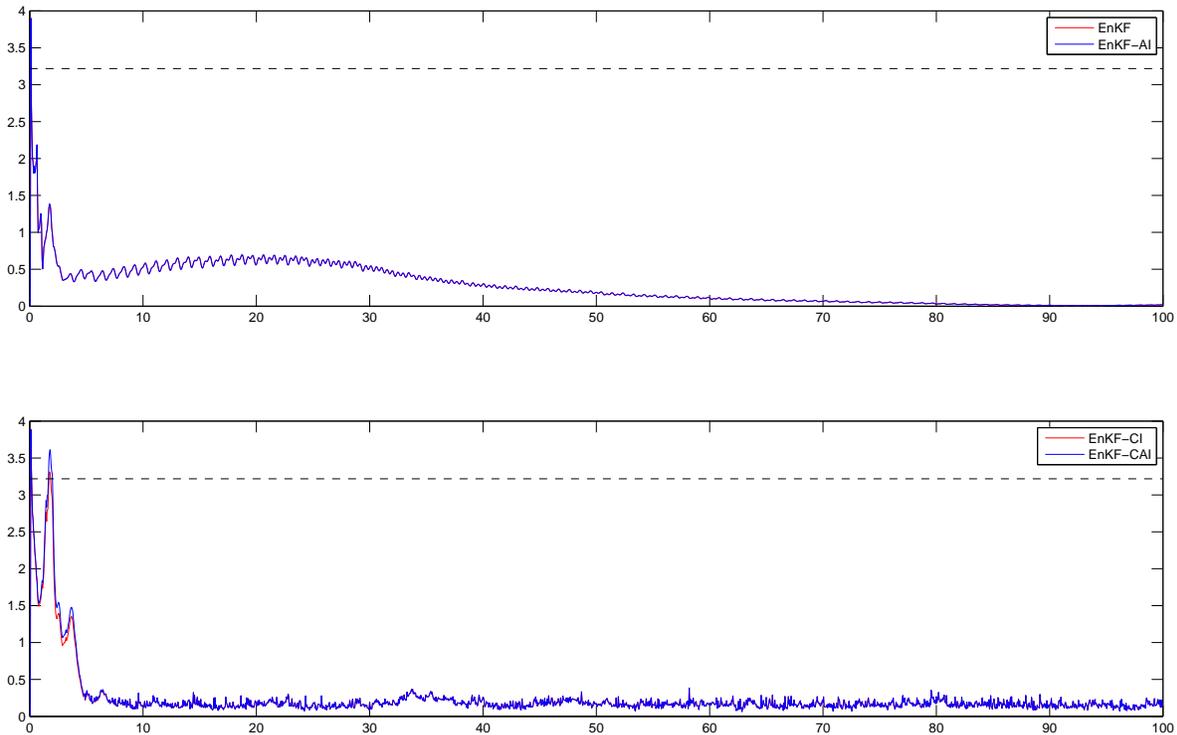}}
\caption{We compare the posterior error for one realization of the four filters in the weakly turbulent ($F=4$) regime. Since the adaptive inflation is not triggered, trajectories collapse into two groups (those with constant inflation and those without). The benchmark RMSE is included (black dash) to emphasize the accuracy of the filters. }
\label{fig:F=4}\label{fig:F4}
\end{figure}

%


We now briefly describe how the thresholds and benchmarks were calculated. The equilibrium mean on each mode is $\bar{x}_i\approx 1.22$ and the variance is $\text{var}(x_i)\approx 3.38$. Using  Kalman one time assimilation, the posterior variance of $x_1$ will be reduced to $0.01$, while the benchmark RMSE will be $3.25$.  From these statistics, we compute $\Me=\sigma_{\Theta}=32.5,\quad \Mc=M_{\Xi}=6.2$.

\subsubsection{Moderate turbulence}
Here we compare the performance of the filters in a moderately turbulent regime, by setting $F=8$. The performances of the four filters are presented in Table \ref{tab:F8}, where we compare frequency of catastrophic filter divergence, RMSE and pattern correlation. In Figure \ref{fig:F8}, we compare the posterior error for the four filters for one typical trajectory. 
\par
Among $100$ trials,  catastrophic filter divergence takes place $12$ times for EnKF, but never takes place for the other three. The adaptive inflation in EnKF-AI has been turned on in $96$ trials, while on average $17.5$ times in each of these trials. As for EnKF-CAI, the adaptive inflation has been turned on in $20$ trials, while on average $6.05$ times each of these trials. 
\par
In terms of accuracy, EnKF will not be analyzed due to the frequent occurrence of catastrophic filter divergence. The EnKF-AI avoids the issue of catastrophic filter divergence, however it has very little skill, since the RMSE is higher than the benchmark RMSE $7.02$ and the pattern correlation is quite low. However, the filters with constant inflation (EnKF-CI and EnKF-CAI), behave almost identically, display significantly better skill, beating the benchmark RMSE and having high pattern correlation.  

\begin{table}[h!]
\begin{center}
\begin{tabular}{|c| c c c c|}
\hline  
Filter &EnKF &EnKF-AI &EnKF-CI &EnKF-CAI\\
\hline
Cata. Div. &12\% &0\% &0\% &0\%\\
RMSE  &NaN &8.6 &3.61 &3.57\\
Pattern Cor. &NaN &0.55 &0.89 &0.89\\
\hline
\end{tabular}
\caption{In the moderate turbulence regime ($F=8$), for the four algorithms we compare frequency of catastrophic filter divergence (Cata. Div.), RMSE, and pattern correlation (Pattern Cor.) The benchmark RMSE is $7.02$. }
\label{tab:F8}
\end{center}
\end{table}

\begin{figure}[h!]
\hspace{-2cm}\scalebox{0.5}{\includegraphics{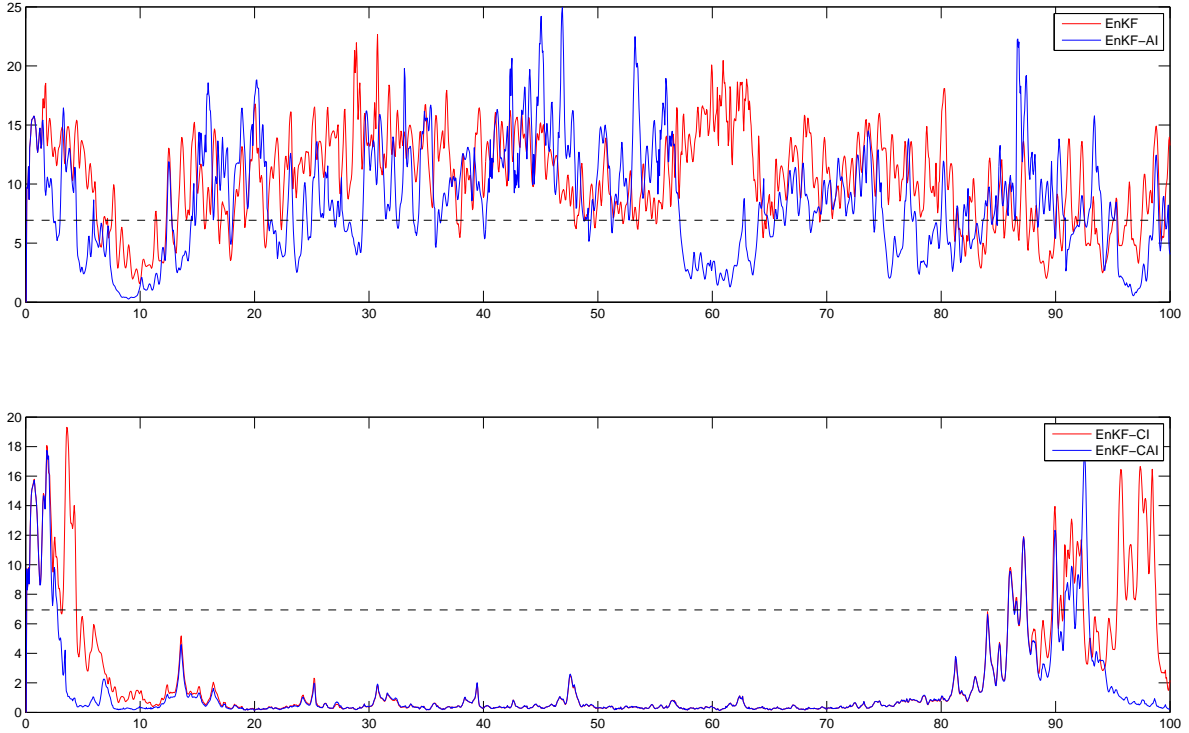}}
\caption{We compare the posterior error for one realization of the four filters in the moderately turbulent ($F=8$) regime. The EnKF and EnKF-AI are not performing well, as they are frequently exceeding the benchmark (black dashed). Adding constant covariance inflation significantly improves the performance.
}
\label{fig:F=8}\label{fig:F8}
\end{figure}

%

We now briefly describe how the thresholds and benchmarks were calculated. The equilibrium mean on each mode is $\bar{x}_i\approx 2.28$ and the variance is $\text{var}(x_i)\approx 12.6$. Using  Kalman one time assimilation, the posterior variance of $x_1$ will be reduced to $0.01$, while the benchmark RMSE will be $7.02$.  From these statistics, we compute $\Me=\sigma_{\Theta}=69.56$ and $\Mc=M_{\Xi}=28.8$. 
%

 \subsubsection{Strong turbulence}
 Here we compare the performance of the filters in a highly turbulent regime, by setting $F=16$. The performances of the four filters are presented in Table \ref{tab:F16}, where we compare frequency of catastrophic filter divergence, RMSE and pattern correlation. In Figure \ref{fig:F16}, we compare the posterior error for the four filters for one typical trajectory. 
\par
 In this highly turbulent regime, both non-adaptive filters, EnKF and EnKF-CI display frequent catastrophic filter divergence, making them practically useless.  Among $100$ trials, catastrophic filter divergence occurs in every trial of EnKF and in $18$ trials of EnKF-CI. Both EnKF-AI and EnKF-CAI experience no catastrophic filter divergence. The adaptive inflation has been turned on in all trials of EnKF-AI, occurring $97.53$ times on average per trial. The adaptive inflation has been turned on for EnKF-CAI in $80$ trials, occurring 18.7 times on average per trial.
\par
In terms of accuracy, the EnKF-AI exhibits very little skill, with RMSE significantly higher than the benchmark RMSE $12.93$ and with low pattern correlation. The EnKF-CAI however does slightly beat the benchmark RMSE and displays moderate pattern correlation. In Section \ref{s:num_tuning} below, we show that the performance of EnKF-CAI is sharply improved by a simple tuning of parameters.

\begin{table}[h!]
\begin{center}
\begin{tabular}{|c| c c c c|}
\hline  
Filter &EnKF &EnKF-AI &EnKF-CI &EnKF-CAI \\
\hline
Cata. Div. &100\% &0\% &18\% &0\%\\
RMSE  &NaN &24.48 &NaN &11.91\\
Pattern Cor. &NaN &0.23 &NaN &0.69\\
\hline
\end{tabular}
\caption{In the highly turbulence regime ($F=16$), for the four algorithms we compare frequency of catastrophic filter divergence (Cata. Div.), RMSE, and pattern correlation (Pattern Cor.) The benchmark RMSE is $12.93$. }
\label{tab:F16}
\end{center}
\end{table}

 \begin{figure}[h!]
\hspace{-2cm}\scalebox{0.5}{\includegraphics{}}
\caption{We compare the posterior error for one realization of the four filters in the strongly turbulent ($F=16$) regime. The filter is very unstable, EnKF (red, upper panel) explodes to machine infinity, and EnKF-CI (red, lower panel) does the same with significant probability. The adaptive inflation mechanism, when applied to these two filters, is triggered frequently, preventing the ensemble from exploding. The performance of EnKF-AI (blue, upper panel)  is worse than the benchmark (black dash). The performance of EnKF-CAI (blue, lower panel), is much better and outperforms the benchmark. 
\label{fig:F=16}\label{fig:F16}}
\end{figure}
We now briefly describe how the thresholds and benchmarks were calculated. The equilibrium mean on each mode is $\bar{x}_i\approx 3.1$ and the variance is $\text{var}(x_i)\approx 40.6$. Using  Kalman one time assimilation, the posterior variance of $x_1$ will be reduced to $0.01$, while the benchmark RMSE will be $12.93$.  From these statistics, we compute $\Me=\sigma_{\Theta}=127.6$ and $\Mc=M_{\Xi}=81.4$. 

\newpage
\subsection{Dependence on  constant inflation strength and observation time}\label{s:num_tuning}
Based on previous discussions, the performance of EnKF-CI and EnKF-CAI are satisfactory in the weakly and moderately turbulent regimes, but the strongly turbulent regime poses a significant challenge. In this section, we study how constant covariance inflation strength $\rho$ and observation time $h$ affect the performance of EnKF-CI and EnKF-CAI in the $F=16$ regime. 
\par
In Table \ref{tab:rho} we compare the performance of EnKF-CAI for different choices of constant inflation strength $\rho$, including RMSE and pattern correlation. We also include the frequency of catastrophic filter divergence in EnKF-CI to emphasize the impact of adaptive inflation. 
 \par
As constant inflation strength is decreased, the frequency of catastrophic filter divergence increases. This agrees with the notion that removing inflation simply results in EnKF, which in Table \ref{tab:F16} was shown to experience catastophic filter divergence in $100\%$ of trials.  
\par
In terms of accuracy, the EnKF-CAI can be optimized by choosing a moderately small constant inflation strength. In particular, when $\rho$ is in the $0.01-0.02$ range the RMSE is significantly smaller than the benchmark RMSE $12.93$ and the pattern correlation is relatively high. Surprisingly, the optimal choice of $\rho$ for EnKF-CAI is very sub-optimal for EnKF-CI, with around half of all trials experiencing catastrophic filter divergence. This emphasizes the importance of creating the right balance between adaptive inflation and constant inflation.   

\begin{table}[h!]
\begin{center}
\begin{tabular}{|c ||c|c|c|c|c|c|c|c|}
\hline
$\rho$ &1 &0.5 &0.2 &0.1 &0.05 &0.02 &0.01 &0.005\\
\hline
EnKF-CI Cata. Div. &3\% &8\% &18\% &18\% &28\% &42\% &57\% &75\%\\
\hline	
EnKF-CAI RMSE &13.05 &13.62 &13.43 &11.91 &8.82 &8.51 &9.3 &10.51\\
EnKF-CAI Cor. &0.64&0.65&0.66&0.69&0.70&0.70&0.75&0.70 \\
\hline
\end{tabular}
\end{center}
\caption{In the $F=16$ regime we display for different choice of constant inflation strength $\rho$, the frequency of catastrophic filter divergence for EnKF-CI (EnKF-CI Cata. Div.), the RMSE for EnKF-CAI and the pattern correlation for EnKF-CAI (EnKF-CAI Cor). Each statistic is generated from $100$ independent trials. The benchmark RMSE is $12.93$.} 
\label{tab:rho}
\end{table}



In Table \ref{tab:obs} we display the results of a similar experiment, now varying the observation interval $h$ keeping a fixed inflation strength $\rho=0.1$. Again we display frequency of catastrophic filter divergence for EnKF-CI and the and pattern correlation RMSE. Each statistic is generated from $100$ independent trials.  
\par
We found that the frequnecy of catastophic filter divergence is not monotonic in $h$, but rather displays a peak at $h = 0.1$. This agrees with the findings of \cite{GM13}. We also find that the accuracy of EnKF-CAI is not monotonic in $h$ but rather, there is an optimal choice around $h=0.1$. As with the inflation strength experiment, the optimal parameter choice for the accuracy of EnKF-CAI occurs when EnKF-CI is exhibiting frequent catastrophic filter divergence. The fact that filter performance decreased with decreasing $h$ can be attributed to a violation of filter observability conditions when the time-step becomes too small \cite{MH12}.

 \begin{table}[h!]
\begin{center}
\begin{tabular}{|c ||c|c|c|c|c|c|c|c|}
\hline
$h$ &0.01 &0.02 &0.05 &0.1 &0.2 &0.5\\
\hline
EnKF-CI Cata. Div. &0\% &1\% &18\% &25\% &5\% &0\% \\
\hline	
EnKF-CAI RMSE &25.75 &20.71 &11.91 &6.43 &14.09 &14.80\\
EnKF-CAI Cor. &0.31&0.37&0.69&0.64&0.50&0.36 \\
\hline
\end{tabular}
\caption{In the $F=16$ regime we display for different choice of observation interval $h$, the frequency of catastrophic filter divergence for EnKF-CI (EnKF-CI Cata. Div.), the RMSE for EnKF-CAI and the pattern correlation for EnKF-CAI (EnKF-CAI Cor). Each statistic is generated from $100$ independent trials. The benchmark RMSE is $12.93$.}
\label{tab:obs}
\end{center}
\end{table}
%

\subsection{Dependence on the numerical integrator}
\label{sec:integrator}\label{s:num_integrators}
In this section, we address the question of whether the catastrophic filter divergence can be avoided by simply using a more stable integrator in EnKF and EnKF-CI. We will see that with implicit methods and variable time step methods, one can avoid catastrophic filter divergence, but at prohibitive computational cost. Moreover, although they avoid catastrophic filter divergence, they do not avoid ordinary filter divergence; in particular we will see that EnKF and EnKF-CI with implicit / variable time-steps do not exhibit any filtering skill, in contrast with EnKF-AI and EnKF-CAI. This corroborates the findings of \cite[Chapter 2]{MH08}, where it was shown that numerical model error from implicit methods leads to reduced filtering skill.     
\par
In Table \ref{tab:intEnKF}, we display the results from $100$ trials, comparing the performance of EnKF-AI with explicit Euler (step size $\Delta = 10^{-4}$) against EnKF with (respectively) explicit Euler ($\Delta = 10^{-4}$), standard $4$-th order Runge-Kutta ($\Delta = 2.5 \times 10^{-3}$), {\tt{ode45}} and implicit Euler ($\Delta =  10^{-2}$). {\tt ode45} is a native MATLAB integrator, which is based on a Runge-Kutta $(4,5)$ method with variable step size. The implicit equation in implicit Euler is solved using the native MATLAB function {\tt{fslove}}. The average time is also shown to indicate the computational expense associated with each method. 
\par
We see that {\tt{ode45}} and implicit Euler are the only methods that do not exhibit catastrophic filter divergence, but each trial is enormously expensive when compared to the explicit Euler based methods. Moreover, {\tt{ode45}} and Implicit Euler display zero filtering skill, as judged by the RMSE and pattern correlation (the benchmark RMSE is $12.93$). As discussed earlier, EnKF-AI also exhibits little skill in this regime, but we shall see that the skill is dramatically improved by introducing additional constant inflation. Unsurprisingly, the explicit Euler based methods are much faster than both the higher order and the implicit methods and the EnKF-AI method is only fractionally more expensive than EnKF.     
\begin{table}[h!]
\begin{center}
\begin{tabular}{|c | c| c c c c|}
\hline
 & EnKF-AI & & EnKF  & & \\
\hline
Integrator & Explicit & Explicit & RK4 &{\tt{ode45}}  & Implicit\\
\hline
Cata. Div. &0\% &100\% &92\% &0\% & 0\% \\
RMSE & 24.48 & NaN & NaN  &44.66 & 20.38 \\
Pattern Cor. & 0.23 & NaN & NaN &0.22 & 0.11\\
Avg. Time  &2.49 &2.39 &19.53 &41.00 & 662.48\\
\hline
\end{tabular}
\end{center}
\caption{The performance and time cost of EnKF-AI (explicit Euler, $\Delta = 10^{-4}$)) against EnKF in the $F=16$ regime. EnKF is implemented by explicit Euler ($\Delta = 10^{-4}$), RK4 ($\Delta = 2.5\times 10^{-3}$), {\tt{ode45}} and implicit Euler ($\Delta =  10^{-2}$). The benchmark RMSE is $12.93$. The time cost is measured in seconds.  }
\label{tab:intEnKF}
\end{table}

In Table \ref{tab:intCCI} we display the results from a similar experiment, now comparing EnKF-CAI (with explicit Euler) against EnKF-CI with the same four choices of integrator. Catastrophic filter divergence is overall less prevalent, with {\tt{ode45}} and implicit Euler once again exhibiting no catastrophic filter divergence. In contrast with the previous experiment, EnKF-CAI exhibits good filtering skill as judged by the RMSE and pattern correlation (once again, the benchmark RMSE is $12.93$). EnKF-CI with {\tt{ode45}} exhibits less filtering skill than EnKF-CAI, with RMSE not beating the benchmark and EnKF-CI with implicit Euler exhibits even less skill. As in the first experiment, the adaptive inflation is more accurate and far cheaper than the higher order and implicit methods.   

\begin{table}[h!]
\begin{center}
\begin{tabular}{|c | c| c c c c|}
\hline
& EnKF-CAI & & EnKF-CI  & & \\
\hline
Integrator & Explicit & Explicit &RK4 &{\tt{ode45}}  & Implicit\\
\hline
Cata. Div. &0\% &18\% &5\% &0\% & 0\% \\
RMSE & 11.91 & NaN & NaN  &14.95 & 16.29 \\
Pattern Cor. & 0.69 & NaN & NaN &0.71 & 0.42\\
Avg. Time  &2.65 &2.34 &21.48 &37.02 & 650.31\\
\hline
\end{tabular}
\end{center}
\caption{The performance and time cost of EnKF-CAI (explicit Euler, $\Delta = 10^{-4}$)) against EnKF-CI in the $F=16$ regime. EnKF is implemented by explicit Euler ($\Delta = 10^{-4}$), RK4 ($\Delta = 2.5\times 10^{-3}$), {\tt{ode45}} and implicit Euler ($\Delta =  10^{-2}$). The benchmark RMSE is $12.93$. The time cost is measured in seconds.   }
\label{tab:intCCI}
\end{table}

\subsection{The distribution of $\Eo$ and $\Cu$}\label{s:num_dist}
The filter ensemble statistics $\Eo$ and $\Cu$, defined in Section \ref{s:aaci_algo} are used to determine when the adaptive inflation of EnKF-AI should be triggered. In particular, the mechanism is triggered whenever either of the two statistics passes an appropriately chosen threshold. In this section we look at the distribution of these statistics in order to evaluate the choice of thresholds introduced in Section \ref{s:choice}. We have collected the data of $\Eo$ and $\Cu$ from the EnKF-AI scheme from all the analysis steps. In Table \ref{tab:stats} we compare the distributions of $\Eo$ and $\Cu$ in the three different turbulent regimes and compare with the choice of threshold. 
\par 
It is clear from the table that stronger turbulence significantly increase the size of both $\Eo$ and $\Cu$. The thresholds $\Me$ and $\Mc$ are always larger than the average of $\Eo$ and $\Cu$, but the ratios $\Me/\langle \Eo\rangle$ and $\Mc/\langle \Cu\rangle$ significantly decreases as the turbulence gets stronger. As a result, the percentage of $\Eo$ and $\Cu$ that are beyond the thresholds significantly increases. Connecting this observation with the performance of EnKF-AI, which is good in $F=4$, slightly off the benchmark in $F=8$, and very bad in $F=16$, we can conclude that whether $\Eo$ and $\Cu$ are beyond the thresholds is a good indicator of the malfunctioning. As intended, the adaptive inflation is rarely triggered when the filter operates properly, but frequently triggered when the situation is chaotic. 
%
\begin{table}[h!]
\begin{center}
\begin{tabular}{|c | c  c c|}
\hline
$F$ & 4 & 8 &16\\
\hline
$\Me$ & 32.5 &69.6 &127.6 \\
Average $\Eo$ & 2.95 & 20.28 & 70 \\
$\BP(\Eo>\Me)$ & 0.0285\% & 3.025\% & 9.73\% \\
\hline
\hline
$\Mc$ & 6.2 &28.8 &81.4 \\
Average $ \Cu$ & 0.001 & 0.0437 & 0.22 \\
 $\BP(\Cu>\Mc)$ & 0\% & 0\% & 0\% \\
\hline
\end{tabular}
\end{center}
\caption{For the three turbulent regimes $(F= 4,8,16)$, we list the thresholds $\Me$ and $\Mc$ chosen according to the method of Section \ref{s:choice}. We also list the average of $\Eo$ and $\Cu$ over $100$ independent trials and also give the probability that the statistics pass their respective threshold.  }
\label{tab:stats}
\end{table}

The histograms of $\Eo$ and $\Cu$ in three regimes are presented in Figure \ref{fig:tail}. $\Eo$ has a Gaussian like tail in the weak turbulence regime and exponential like in strong turbulence. The tail of $\Cu$ is much heavier and polynomial-like. The thresholds we compute from the benchmarks are fairly large comparing with the distribution. These two posterior observations suggest that using even an aggressive threshold strategy as \ref{threshold:aggressive}, the outliers are relatively scarce so adaptive inflation does not hamper the performance of the filters.
\par
It is worth noting that $\Cu$ is usually a small number and very rarely exceeds the threshold $\Mc$. This suggests that the filter performance would be unaffected if the adaptive inflation was changed to $
\lambda_n=c_\cut \Eo $ so that only $\Eo$ is used to trigger the inflation. We have tested this simpler adaptive inflation mechanism with both EnKF-AI and EnKF-CAI in the setting of the previous subsections, and the performances are much as the same. From a theoretical perspective however, both statistics must be included to obtain a rigorously stable filter.

%
 \begin{figure}[!h]
\hspace{-2cm}\scalebox{0.5}{\includegraphics{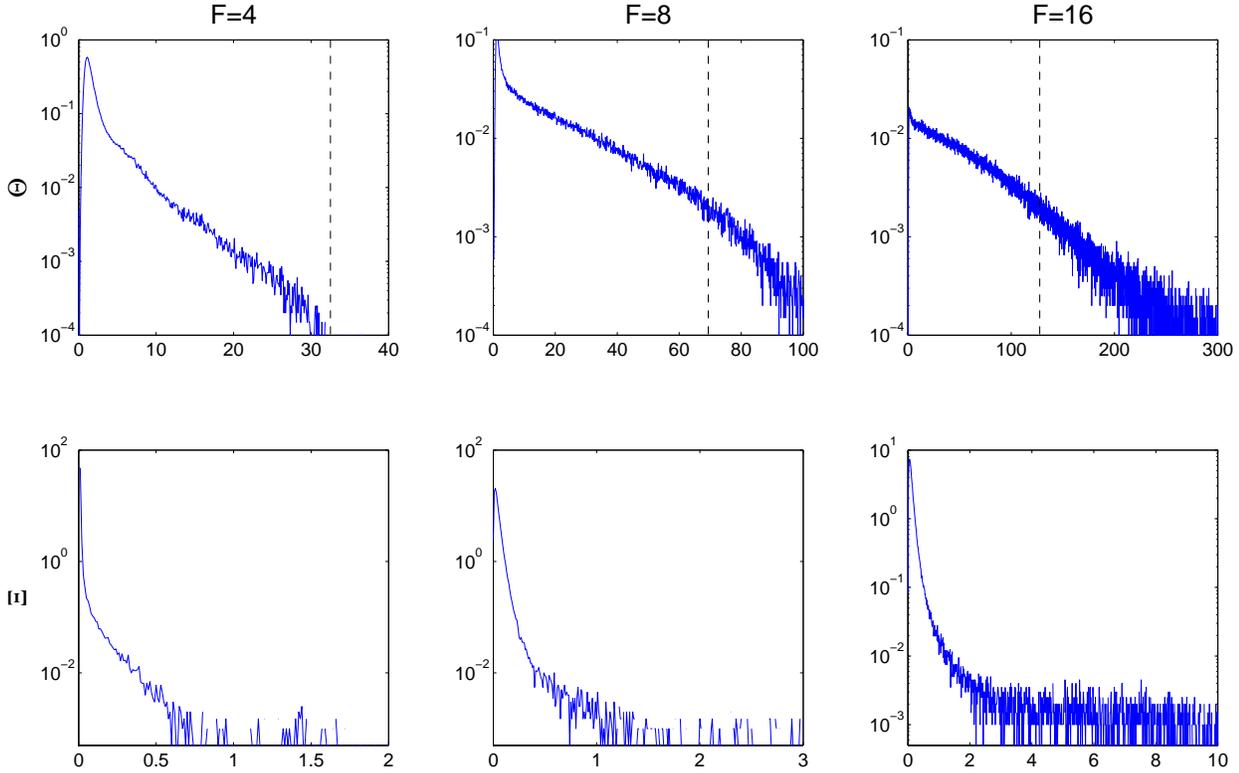}}
\caption{Histograms of $\Eo$ (upper panel) and $\Cu$ (lower panel) in $F=4,8,16$ regimes. The thresholds $\Me$ are marked as black dashes. The thresholds $\Mc$ lie out side the $x$-axis for $\Cu$. Note the logarithmic scale on the vertical axis. 
\label{fig:tail}}
\end{figure}
\subsection{Discussion of Experiments}
In Section \ref{s:num_turb}, the numerical simulations convincingly demonstrate that catastrophic filter divergence is prevalent in EnKF and EnKF-CI, particularly when using an unstable integrator in the forecast step. By comparing with EnKF-AI and EnKF-CAI, adaptive inflation is shown to successfully avoid catastrophic filter divergence, without sacrificing any filtering skill. In the weakly turbulent regime, the adaptive inflation mechanism is rarely turned on and EnKF-AI  performs similarly to EnKF and likewise for the constant inflation counterparts. In more turbulent regimes, the adaptive inflation is regularly triggered and the adaptive inflation filters display filtering skill, typically beating the RMSE benchmark. In Section \ref{s:num_tuning} it is also shown that this skill can be optimized by tuning constant inflation strength.    

%

\par
We saw in Section \ref{s:num_integrators} that catastrophic filter divergence can be avoided by using implicit or variable time step integrators, but at the expense of prohibitive computational cost and the loss of filtering skill. Indeed, one would be better off using the naive estimator developed for the RMSE benchmark. 
\par
The adaptive inflation method permits for the effective use of very cheap integrators such as explicit Euler in the forecast step. As evidenced by EnKF, it is impossible to use such integrators in turbulent models, due to prevalence of catastrophic filter divergence. The adaptive inflation completely avoids this issue.

%
%
%

In Section \ref{s:num_dist}, we provide the distribution analysis of $\Eo$ and $\Cu$ which indicates the thresholds are achieving our purpose. We also saw that the threshold for $\Cu$ was never triggered, indicating that a simpler adaptive inflation only using the statistic $\Eo$ will have practically the same performance. This has been validated, but is not presented here. 
%

\section{Conclusion and Discussion}\label{s:conclusion}

In this article we have proposed a novel modification of ensemble based methods using an adaptive covariance inflation. This modified method resolves common issues with ensemble based methods, which were raised in the introduction, namely that there is a lack of understanding of the stability properties of EnKF, with catastrophic filter divergence drawing stark attention to this fact. The EnKF-AI and related filtering methods (ETKF-AI, EAKF-AI, EnKF-CAI, etc.) resolve this stability issue entirely. 
\par
In particular we can develop a complete stability theory for these filters. In Section \ref{s:stab} we have proved time-uniform mean-square upper bounds for EnKF-AI and related filters, using a Lyapunov argument. This is a considerable improvement on the known results for EnKF \cite{TMK15non} since the observable energy criterion is no longer required and the resulting Lyapunov function for the filter always has compact sub-level sets. In Section \ref{s:ergo} we exploit this fact to prove geometric ergodicity of the signal-ensemble process for EnKF-AI and related methods. The ergodicity result is a type of stability which ensures that filter initialization errors are dissipated exponentially quickly by the method.     
\par
In Section \ref{s:numerics} we provide numerical evidence which supports the stability theory developed for EnKF-AI and related methods. In particular, catastrophic filter divergence is completely eliminated by adaptive inflation, even when cheap unstable integrators are used in the forecast step. For such integrators, catastrophic filter divergence is unavoidable in EnKF and EnFK-CI. Moreover, adaptive inflation does not sacrifice any filtering skill, with EnKF-AI and EnKF-CAI displaying good accuracy when compared to benchmark accuracy scores, even in highly turbulent regimes. This suggests that adaptive inflation may in fact make filtering algorithms faster, since they allow for such cheap, typically unstable forecast integrators.

In recent years, there have been  many methods introduced featuring a type of adaptive inflation filtering. These methods are motivated by very different principles. For example, \cite{And07, And09} introduce a method that uses the norm of innovation to filter the optimal parameter for multiplicative inflation. \cite{BS13,ZH15} introduce two different ways to find the covariances of the system and observation noises, assuming they are not known. \cite{YZ15} designs a covariance relaxation method, so the posterior ensemble is pulled back to the forecast ensemble with a linear factor obtained from the innovation process. In \cite{sanzstuart14} a modification of 3DVAR is proposed which relies on projection back to a stable region, this modification has been theoretically analyzed from the perspective of long-time accuracy. On one hand, these methods are quite different from the EnKF-AI method we introduced. With the exception of \cite{sanzstuart14}, none have been rigorously proved to be stable in the sense of our theorems here. On the other hand, it is very possible that our theoretical framework here can be extended to these methods to build a stability framework, at least in some simple settings. The compelling reason here is that all these methods use the innovation sequence to decide the inflation, relaxation or noise covariance, which all grow with respect to $\Eo$. From this perspective, our framework here offers an explanation of the good performance of these data assimilation methods, as they are more resilient against filter divergence; and when the filter is on the edge of malfunctioning, the adaptive mechanisms can pull the ensemble back to more reasonable states. 
\par

\appendix

\section{Elementary claims}
\begin{lem}
\label{lem:young}
By Young's inequality, for any $\epsilon>0$ and $x,y\in \mathbb{R}^d$, the following holds:
\[
|x+y|^2=|x|^2+|y|^2+2\langle x, y\rangle\leq (1+\epsilon^2) |x|^2+(1+\epsilon^{-2})|y|^2. 
\]
\end{lem}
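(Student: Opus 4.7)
The statement is the elementary Young-type inequality
\[
|x+y|^2 \leq (1+\epsilon^2)|x|^2 + (1+\epsilon^{-2})|y|^2,
\]
so the plan is short and routine. I would begin by writing out the expansion $|x+y|^2 = |x|^2 + |y|^2 + 2\langle x,y\rangle$, which the statement itself already records as an intermediate equality. All the work then reduces to controlling the cross term $2\langle x,y\rangle$.

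Next I would apply Cauchy--Schwarz to get $\langle x,y\rangle \leq |x|\,|y|$, and then invoke the scalar AM--GM inequality in the form $2ab \leq \epsilon^2 a^2 + \epsilon^{-2} b^2$ with $a = |x|$, $b = |y|$. This yields $2\langle x,y\rangle \leq \epsilon^2 |x|^2 + \epsilon^{-2}|y|^2$. Plugging this back into the expansion and grouping the $|x|^2$ and $|y|^2$ terms gives precisely the claimed bound.

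There is no real obstacle here; the only thing to double-check is the choice of the AM--GM form so that the coefficient of $|x|^2$ picks up exactly $\epsilon^2$ (rather than $\epsilon$ or $\tfrac{1}{2}\epsilon^2$), which matches the $(1+\epsilon^2)$ and $(1+\epsilon^{-2})$ appearing in the statement. Since every inequality used is sharp up to this parameter choice, the proof is immediate once the right AM--GM split is written down.
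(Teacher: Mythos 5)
Your proof is correct and matches the paper's (implicit) argument exactly: the paper simply invokes Young's inequality on the cross term $2\langle x,y\rangle \leq \epsilon^2|x|^2 + \epsilon^{-2}|y|^2$ after expanding the square, which is precisely your Cauchy--Schwarz plus AM--GM step. Nothing further is needed.
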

%
\begin{lem}
\label{lem:mean}
For any group of vectors $v_1,\ldots, v_n$, the following holds:
\[
\arg\min_{v} \sum_{k=1}^n|v_i-v|^2=\frac{1}{n}\sum_{k=1}^nv_i.
\]
\end{lem}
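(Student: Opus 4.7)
The plan is to recognize that $f(v) := \sum_{k=1}^n |v_k - v|^2$ is a strictly convex quadratic function of $v \in \reals^d$, so it admits a unique minimizer which is characterized by the first-order condition $\nabla f(v)=0$. Computing the gradient coordinate-wise yields $\nabla f(v) = -2 \sum_{k=1}^n (v_k - v) = 2nv - 2\sum_{k=1}^n v_k$, and setting this to zero immediately gives the claimed formula $v = \frac{1}{n}\sum_{k=1}^n v_k$. Strict convexity (the Hessian is $2n I$, which is positive definite) ensures that this critical point is in fact the unique global minimizer.

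Alternatively, and perhaps more cleanly for an appendix lemma, I would complete the square. Writing $\bar v := \frac{1}{n}\sum_{k=1}^n v_k$ and expanding,
\[
\sum_{k=1}^n |v_k - v|^2 = \sum_{k=1}^n |v_k - \bar v|^2 + 2\Big\langle \sum_{k=1}^n (v_k - \bar v),\, \bar v - v\Big\rangle + n|\bar v - v|^2.
\]
The middle cross-term vanishes by the defining property $\sum_{k=1}^n (v_k - \bar v) = 0$ of the sample mean, leaving
\[
\sum_{k=1}^n |v_k - v|^2 = \sum_{k=1}^n |v_k - \bar v|^2 + n|\bar v - v|^2.
\]
The first term on the right is independent of $v$, and the second is nonnegative and vanishes if and only if $v = \bar v$, which proves the claim.

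There is no real obstacle here — the lemma is elementary and the completion-of-squares identity is the cleanest route since it also records the useful byproduct formula for the residual sum of squares, which appears implicitly in the covariance calculations used in Section \ref{s:stab}.
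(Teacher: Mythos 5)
Your proof is correct and your first argument (setting the gradient of the strictly convex quadratic to zero) is exactly what the paper's one-line proof means by ``verify using the KKT condition,'' just written out in full. The completion-of-squares identity you add is a clean, equivalent alternative, but it is not a genuinely different route for such an elementary fact.
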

\begin{proof}
One can verify this by using the KKT condition. 
\end{proof}
%
\section{Stability for ETKF-AI, EAKF-AI}
\begin{proof}[Proof for Theorem \ref{thm:adESRF}]
We use the observed-unobserved notation introduced in Section \ref{s:enkf_algo}. Notice that the mean update of \eqref{sys:ESQFad} is the ensemble averaged version of \eqref{sys:EnKFad} without the artificial perturbations $\xi_n^{(k)}$. Applying Jensen's inequality
\[
\mathbb{E}_{n-1}|\overline{X}_n|^2\leq \frac{1}{K}\sum_{k=1}^K\mathbb{E}_{n-1}|X_n^{(k)}|^2
\]
to \eqref{tmp:xk}, we find the following with a constant $D_1$,
\[
\mathbb{E}_{n-1}|\overline{X}_n|^2
-2\rhom^{-1}\|H_0\|^2\mathbb{E}_{n-1}|HU_n|^2\leq D_1.
\]
The corresponding part for \eqref{tmp:Yj2} will become
\[
|\overline{Y}_n|^2\leq (1+\frac{1}{2}\beta_h)|\overline{\Yhat}_n|^2+(1+2\beta_h^{-1})|\widehat{B}^T_nH_0^T(I+H_0\widetilde{C}^X_nH_0^T)^{-1}(H_0\overline{\Xhat}_n-Z_n)|^2. 
\]
Applying Jensen's inequality to \eqref{tmp:bh}, we find the second part above is bounded by
\[
|\widehat{B}^T_nH_0^T(I+H_0\widetilde{C}^X_nH_0^T)^{-1}(H_0\overline{\Xhat}_n-Z_n)|^2
\leq K\|H_0\|^2\max\{M^2_1\Mc^2, \rhom^{-2}c_\cut^{-2}\}. 
\]
So  there is a constant $D_2$ such that 
\[
\mathbb{E}_{n-1}|\Vbar_n|^2-2\rhom^{-1}\|H\|^2\mathbb{E}_{n-1}|HU_n|^2\leq (1+\frac{1}{2}\beta_h)|\overline{\Vhat}_n|^2+D_2,
\]
where we use the fact that $\|H_0\|^2=\|H\|^2$. Then notice that the posterior covariance follows $C_n\preceq\Chat_n$ by \eqref{eqn:postprior}. Therefore using that 
\[
\sum_{k=1}^K|V_n^{(k)}|^2=K|\Vbar_n|^2+\text{tr}(C_n)\leq 
K|\Vbar_n|^2+\text{tr}(\Chat_n),
\]
and $\sum_{k=1}^K|\Vhat_n^{(k)}|^2=K|\overline{\Vhat}_n|^2+\text{tr}(\Chat_n)$, we obtain from the previous inequality 
\[
\mathbb{E}_{n-1}\sum_{k=1}^K|V_n^{(k)}|^2-2K\rhom^{-1}\|H\|^2\mathbb{E}_{n-1}|HU_n|^2
\leq (1+\frac{1}{2}\beta_h)\mathbb{E}_{n-1}\sum_{k=1}^K|\Vhat_n^{(k)}|^2+D_2.
\]
Assumption \ref{aspt:kinetic} leads to 
\[
(1+\frac{1}{2}\beta_h)\mathbb{E}_{n-1}\sum_{k=1}^K|\Vhat_n^{(k)}|^2
\leq (1-\frac{1}{2}\beta_h)\sum_{k=1}^K|V_{n-1}^{(k)}|^2+(1+\frac{1}{2}\beta_h)K K_h,
\]
and moreover
\[
\mathbb{E}_{n-1}\sum_{k=1}^K|V_n^{(k)}|^2-2K\rhom^{-1}\|H\|^2\mathbb{E}_{n-1}|HU_n|^2
\leq (1-\frac{1}{2}\beta_h)\sum_{k=1}^K|V_{n-1}^{(k)}|^2+(1+\frac{1}{2}\beta_h)K K_h+D_2.
\]
Then notice a multiple of Assumption \ref{aspt:kinetic} is 
\[
4K\beta_h^{-1}\rho_0^{-1}\|H\|^2\mathbb{E}_{n-1}|U_n|^2\leq 4K\beta_h^{-1}\rho_0^{-1}\|H\|^2(1-\beta_h)|U_{n-1}|^2+4K\beta_h^{-1}\rho_0^{-1}\|H\|^2K_h. 
\]
The sum of the two previous two inequalities yields 
\begin{align*}
\mathbb{E}_{n-1}\CE_n&=\sum_{k=1}^K\mathbb{E}_{n-1}|V^{(k)}_n|^2+
4K\beta_h^{-1}\rho_0^{-1}(1-\frac{1}{2}\beta_h)\|H\|^2\mathbb{E}_{n-1}|U_n|^2\\
&\leq (1-\frac{1}{2}\beta_h)\sum_{k=1}^K|V_{n-1}^{(k)}|^2+ 4K\beta_h^{-1}\rho_0^{-1}\|H\|^2(1-\beta_h)|U_{n-1}|^2+D,
\end{align*}
with $D:=4K\beta_h^{-1}\rho_0^{-1}\|H\|^2K_h+(1+\frac{1}{2}\beta_h)K K_h+D_2$. Then notice that $(1-\frac{1}{2}\beta_h)^2\geq (1-\beta_h)$, therefore
\[
\mathbb{E}_{n-1}\CE_n
\leq (1-\frac{1}{2}\beta_h)\CE_{n-1}+D. 
\]
\end{proof}

\section{Ergodicity formulas}\label{app:ergo_formulas}
Here we present the concrete formulas that are used in Section \ref{s:ergo} for ETKF-AI and EAKF-AI. We omit unneccessary details concerning the transform and adjustment matrices, which can be  found in \cite{bishop01, And01,MH12,TMK15non}. In both ESRF methods, the Markov kernel 
$\Phi : \CX \times \CB(\CY) \to [0,1]$ with $\CE:=\reals^d \times \reals^{d\times K}$ and $\CY : = \reals^d \times \reals^{d \times K} \times \reals^q$ is described by 
\begin{equ}
(U_{n-1} , V_{n-1}^{(1)},\dots,V_{n-1}^{(K)}) \mapsto (U_n, \Vhat_{n}^{(1)},\dots,\Vhat_{n}^{(K)},Z_n)\;.
\end{equ}
The deterministic step is given by the map  $\Gamma(U , V , Z ) = (U,\Gamma^{(1)},\dots,\Gamma^{(K)})$ where
\begin{equation}
\label{e:gammak}
\Gamma^{(k)} = \Vhatbar - \widetilde{C} H^T (I + H \widetilde{C} H^T)^{-1} (H \Vhatbar - Z)  + S^{(k)}
\end{equation}
with $\Vbar = \frac{1}{K}\sum_{k=1}^K \Vhat^{(k)}$ and $\widetilde{C}=\Chat = \frac{1}{K-1}\sum_{k=1}^K (\Vhat^{(k)} - \Vbar) \otimes (\Vhat^{(k)} - \Vbar)  $ and $S^{(k)}$ is the $k$-th column of the updated spread matrix $S$. $S$ is given by $\Shat T(\Shat)$ in the ETKF method, and $A(\Shat)\Shat$ in the EAKF method, where $\Shat$ is the forecast spread matrix $\Shat = (\Vhat^{(1)} - \Vhatbar , \dots ,\Vhat^{(K)} - \Vhatbar)$.
\par
With the ETKF method, there are a few way to define the transformation matrix $T(\Shat)$, one reasonable choice is taking the matrix square root
\begin{equ}
T(\Shat)=\big(I_K+(K-1)^{-1}\widehat{S}^TH^TH\widehat{S}\big)^{-1/2}=\big(I_K-(K-1)^{-1}\widehat{S}^TH^T(I+H\widehat{C}H^T)^{-1}H\widehat{S}^T\big)^{1/2}\;.
\end{equ} 
\par
As for the EAKF method, the construction of the adjustment matrix $A(\Shat)$ is slightly more complicated
as the following 
\begin{equ}
A(\Shat)=Q\Lambda G^T(I+D)^{-1/2}\Lambda^{\dagger}Q^T .
\end{equ}
Here $Q\Lambda R$ is the SVD decomposition of $\Shat$ and $G^TDG$ is the diagonalization of $(K-1)^{-1}\Lambda^T Q^T H^TH^TQ\Lambda^T$, and $\dagger$ indicates pseudo inverse of a matrix. 
\par
And for the ETKF-AI and EAKF-AI, everything is the same as their counterpart without adaptive inflation, except that we use the following in \eqref{e:gammak}
\begin{equ}
 \widetilde{C} = \frac{1}{K-1}\sum_{k= 1}^{K}(\widehat{V}^{(k)}-\overline{\widehat{V}})\otimes (\widehat{V}^{(k)}-\overline{\widehat{V}}) + c_\varphi\Eo (1+\Cu)\unit_{\Eo > \Me {\rm \;or\;} \Cu > \Mc} I .
\end{equ}

\bibliographystyle{unsrt}
\bibliography{aci}

\end{document}